\title{The shrinkage type of knots}
\author{Holger Kammeyer}
\address{Karlsruhe Institute of Technology\\ Institute for Algebra and Geometry\\ Englerstr. 2, Mathebau (20.30)\\ 76131 Karlsruhe\\ Germany}
\email{holger.kammeyer@kit.edu}
\urladdr{www.math.kit.edu/iag7/~kammeyer/}
\subjclass[2010]{57M27 (primary), 11J82 (secondary)}
\keywords{knots, finite cyclic coverings, irrationality exponent}
\newtheorem{theorem}{Theorem}
\newtheorem{proposition}{Proposition}
\newtheorem{lemma}{Lemma}
\theoremstyle{definition}
\newtheorem{definition}{Definition}
\theoremstyle{remark}
   \let\c@corollary=\c@theorem
   \let\c@proposition=\c@theorem
   \let\c@lemma=\c@theorem
   \let\c@definition=\c@theorem
   \let\c@remark=\c@theorem
   \let\c@example=\c@theorem
   \let\c@equation=\c@theorem
   \let\c@conjecture=\c@theorem
   \let\c@question=\c@theorem
\newcommand*{\MRref}[2]{ \href{http://www.ams.org/mathscinet-getitem?mr=#1}{MR \textbf{#1}}}
\newcommand*{\arXiv}[1]{ \href{http://www.arxiv.org/abs/#1}{arXiv:\textbf{#1}}}
\newcommand*{\Z}{\mathbb Z}
\newcommand*{\Q}{\mathbb Q}
\newcommand*{\R}{\mathbb R}
\newcommand*{\C}{\mathbb C}
\newcommand*{\ima}{\textup{i}}
\newcommand*{\diff}{\textup{d}}
\newcommand*{\muinf}{\underline{\smash{\mu}}}
\newcommand*{\musup}{\overline{\mu}}
\DeclareMathOperator{\rank}{rank}
\DeclareMathOperator{\im}{im}
\DeclareMathOperator{\coker}{coker}
\newcounter{commentcounter}
\newcommand{\showcomments}{yes}
\newsavebox{\commentbox}
\newenvironment{com}%
{\ifthenelse{\equal{\showcomments}{yes}}%
{\footnotemark
        \begin{lrbox}{\commentbox}
        \begin{minipage}[t]{1.25in}\raggedright\sffamily\tiny
        \footnotemark[\arabic{footnote}]}
{\begin{lrbox}{\commentbox}}}
{\ifthenelse{\equal{\showcomments}{yes}}
{\end{minipage}\end{lrbox}\marginpar{\usebox{\commentbox}}}
{\end{lrbox}}}
\newcommand\myurl[1]{\url{#1}}
\newcommand{\ignore}[1]{}
\begin{document}

\begin{abstract}  
We study spectral gaps of cellular differentials for finite cyclic coverings of knot complements.  Their asymptotics can be expressed in terms of irrationality exponents associated with ratios of logarithms of algebraic numbers determined by the first two Alexander polynomials.  From this point of view it is natural to subdivide all knots into three different types.  We show that examples of all types abound and discuss what happens for twist and torus knots as well as knots with few crossings.
\end{abstract} 

\maketitle

\section{Introduction}

Let \(K \subset S^3\) be a knot and let \(X = S^3 \setminus \nu K\) be the complement of an open tubular neighborhood of \(K\).  The knot group \(G = \pi_1(X)\) has infinite cyclic abelianization so that for each positive integer \(n\) we have a unique \(n\)-sheeted cyclic covering \(X_n \rightarrow X\).  The asymptotic size of the homology \(H_1(X_n)\) for \(n \rightarrow \infty\) is well understood.  The Betti numbers \(b_1(X_n)\) are bounded while for the torsion subgroup \(\tau H_1(X_n)\) we have
\[ \lim_{n \rightarrow \infty} \frac{\log |\tau H_1(X_n)|}{n} = \frac{1}{2 \pi} \int_0^{2 \pi} \log |\Delta(e^{i\theta})|\, \textup{d}\theta \]
where the right hand side is known as the \emph{logarithmic Mahler measure} \(m(\Delta)\) of the Alexander polynomial \(\Delta(z)\) of \(K\) \cite{Silver-Williams:Mahler}*{Theorem~2.1}.  These results illustrate how a system of finite coverings can determine analytic invariants in the limit:  we obtain \(b_1^{(2)}(X_\infty) = 0\) and \(\rho^{(2)}(X_\infty) = m(\Delta)\) for the first \emph{\(\ell^2\)-Betti number} and the \emph{\(\ell^2\)-torsion} of the infinite cyclic covering \(X_\infty \rightarrow X\).  The first equality is a consequence of \emph{L{\"u}ck approximation}
\cite{Lueck:Approximating}*{Theorem~0.1}
  and the second equality follows from the \emph{approximation conjecture} for \(\ell^2\)-torsion.  In full, the latter is only known for \(\Z\)-coverings \cite{Lueck:ApproxSurvey}*{Remark~6.5} which is precisely the situation at hand.

It is the idea of the so called \emph{Novikov--Shubin numbers}, that for infinite coverings of finite CW complexes yet another homotopy invariant is extractable from the cellular chain complex, beside torsion and homology.  Viewing cellular differentials as bounded operators after \(\ell^2\)-completion, Novikov--Shubin numbers quantify how close these operators are to having a \emph{spectral gap} at zero.  This leads us to studying the asymptotics of spectral gaps of cellular differentials of \(X_n\).  To do so, pick any CW structure of \(X\) and endow \(X_n\) and \(X_\infty\) with the induced structure.  The cell structure determines canonical inner products on the cellular chain modules \(C_k(X_n; \C)\) turning them into finite-dimensional Hilbert spaces.  Therefore the differentials \(d_k^n \colon C_k(X_n; \C) \longrightarrow C_{k-1}(X_n; \C)\) have well-defined \emph{singular values}: the eigenvalues of the positive-semidefinite operator \(\sqrt{{d_k^n}^* d_k^n}\).  The smallest nonzero value amongst them quantifies the size of the spectral gap of \(d^n_k\) around zero.  It is clear that in our context of knot coverings only the second differentials \(d_2^n \colon C_2(X_n; \C) \longrightarrow C_1(X_n; \C)\) are of interest.  So let \(\sigma_n\) be the smallest positive singular value of \(d_2^n\).  Two questions arise: Does \(\sigma_n\) tend to zero for \(n \rightarrow \infty\)? And if yes, how fast?

\begin{definition}
The \emph{lower} and \emph{upper shrinkage rates} of \(K\) are given by
\[ \muinf(K) = \liminf\limits_{n \rightarrow \infty} \frac{-\log \sigma_n}{\log n} \quad \text{ and } \quad \musup(K) = \limsup\limits_{n \rightarrow \infty} \frac{-\log \sigma_n}{\log n}. \]
\end{definition}

Whenever the two rates agree we will simply talk about the \emph{shrinkage rate} \(\mu(K)\). So for example \(\mu(K) = \lambda\) if \(\sigma_n = n^{-\lambda}\).  While the sequence \(\sigma_n\) depends on the chosen CW structure, we prove that the shrinkage rates do not.  In fact, we will show that \(\muinf(K)\) and \(\musup(K)\) are homotopy invariants of \(X\) and thus knot invariants of \(K\) which justifies the notation \(\muinf(K)\) and \(\musup(K)\).  As the main theorem of this article we show how to compute both invariants for all knots in terms of the first and second Alexander polynomials \(\Delta_1\) and \(\Delta_2\) whose definition we recall in Section~\ref{sec:reducedalex}.  To explain the result we need some number theoretic input.  Let
\[ \xi_1 = e^{\pi \ima t_1}, \,\ldots\,,\, \xi_u = e^{\pi \ima t_u} \quad \text{with} \quad t_1, \ldots, t_u \in (0,1) \]
be the distinct roots of the \emph{reduced Alexander polynomial} \(\Lambda = \frac{\Delta_1}{\Delta_2}\) on the upper half of the unit circle with multiplicities \(\muinf_1, \ldots, \muinf_u\).  We recall in Section~\ref{sec:excursion} that each \(t_j\) is either rational or transcendental and each \(t_j\) has a finite \emph{irrationality exponent} \(\nu_j\), a real number which is \(= 1\) if \(t_j\) is rational and \(\ge 2\) otherwise.  We call \(\musup_j = \nu_j \muinf_j\) the \emph{total multiplicity} of the root \(\xi_j\).

\begin{theorem} \label{thm:maintheorem} 
  If \(\Lambda\) has no roots on the unit circle, the sequence \(\sigma_n\) has a positive lower bound and thus \(\mu(K) = 0\).  Otherwise we have
  \[ \muinf(K) = \max_{j = 1, \ldots, u} \{ \muinf_j \} \quad \text{and } \quad \musup(K) = \max_{j = 1, \ldots, u} \{ \musup_j \}. \]
\end{theorem}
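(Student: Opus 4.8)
The plan is to diagonalise $d_2^n$ using the deck action and then extract the smallest singular value from the Alexander polynomials. First I would fix a CW structure on $X$ with one $0$-cell, $g$ one-cells and $g-1$ two-cells arising from a deficiency-one presentation of the knot group; as $\muinf(K)$ and $\musup(K)$ do not depend on the CW structure this costs nothing. Lifting to $X_\infty$ makes the cellular complex a complex of free modules $R^{g-1}\xrightarrow{D}R^{g}\to R$ over $R=\C[t,t^{-1}]$, where $t$ is the deck transformation and $D$ is the Alexander matrix. For $X_n$ the group $C_k(X_n;\C)$ is free over $\C[\Z/n]$ on the $k$-cells, and the cellular inner product makes the translates of the cells orthonormal. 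The normalised discrete Fourier transform $\C[\Z/n]\cong\bigoplus_{\zeta^n=1}\C_\zeta$ is unitary, so it splits $d_2^n$ orthogonally into $\bigoplus_{\zeta^n=1}D(\zeta)$ with $D(\zeta)$ the evaluation of $D$ at $t=\zeta$. Hence the singular values of $d_2^n$ are those of the blocks $D(\zeta)$, and
\[ \sigma_n=\min_{\zeta^n=1}s(\zeta),\qquad s(\zeta):=\text{smallest positive singular value of }D(\zeta). \]

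The decisive step, which I expect to be the main obstacle, is to compare $s(\zeta)$ with the Alexander data by uniform constants. Since $H_1(X_\infty;\C)$ is $R$-torsion, $D$ has rank $g-1$, so $s(\zeta)=s_{g-1}(\zeta)$ at every $\zeta$ of full rank. I would write, with $s_1\ge\cdots\ge s_{g-1}$,
\[ s_{g-1}(\zeta)=\frac{\prod_{i=1}^{g-1}s_i(\zeta)}{\prod_{i=1}^{g-2}s_i(\zeta)},\quad\Big(\prod_{i=1}^{g-1}s_i\Big)^2=\det\big(D(\zeta)^*D(\zeta)\big),\quad\prod_{i=1}^{g-2}s_i=\lVert\wedge^{g-2}D(\zeta)\rVert. \]
By Cauchy--Binet the numerator is the sum of squared maximal minors of $D(\zeta)$, whose gcd is $\Delta_1$; the denominator is comparable to the largest $(g-2)\times(g-2)$ minor, whose gcd is $\Delta_2$ (the definitions recalled in Section~\ref{sec:reducedalex}). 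After factoring out these gcds the remaining cofactors are coprime in $R$, hence share no zero on the compact circle and stay between two positive constants there. This gives the uniform comparison $s(\zeta)\asymp|\Lambda(\zeta)|$ for $\zeta\in S^1$ with $\Lambda(\zeta)\neq0$, so in particular $s(\zeta)\asymp|\zeta-\xi_j|^{\muinf_j}$ near each root $\xi_j$. If $\Lambda$ has no root on $S^1$ then $|\Lambda|$ is bounded below on the circle, whence $\sigma_n$ is bounded below and $\mu(K)=0$; this already settles the first assertion. The difficulty here is precisely that one must pass between the metric singular values and the algebraic minors without losing control of the constants.

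Assuming now that $\Lambda$ has roots on $S^1$, I would feed the comparison into $-\log\sigma_n=\max_{\zeta^n=1}(-\log|\Lambda(\zeta)|)+O(1)$. As the roots $\xi_1,\dots,\xi_u$ and their conjugates are isolated and the $n$-th roots of unity $\mu_n$ are conjugation-invariant, the maximum is controlled by how well each $\xi_j$ is approximated within $\mu_n$:
\[ \frac{-\log\sigma_n}{\log n}=\max_{j=1,\dots,u}\muinf_j\,\frac{-\log\operatorname{dist}(\xi_j,\mu_n)}{\log n}+o(1). \]
Writing the nearest point of $\mu_n$ to $\xi_j=e^{\pi\ima t_j}$ and using that the chordal distance is comparable to $\operatorname{dist}(n t_j/2,\Z)/n$ turns this into
\[ \frac{-\log\operatorname{dist}(\xi_j,\mu_n)}{\log n}=1+\frac{-\log\operatorname{dist}(n t_j/2,\Z)}{\log n}+o(1). \]
For rational $t_j$ one discards the finitely many $n$ with $\xi_j\in\mu_n$, where $s(\xi_j)$ stays bounded below; the neighbouring roots of unity sit at distance $\asymp1/n$ and contribute exactly $\muinf_j$, matching $\nu_j=1$.

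It remains to take the two limits. The standard reformulation of the irrationality exponent, together with its invariance under scaling by $\tfrac12$, gives $\limsup_n\frac{-\log\operatorname{dist}(n t_j/2,\Z)}{\log n}=\nu_j-1$, while Weyl equidistribution gives liminf $0$. Since a maximum over finitely many indices commutes with $\limsup$,
\[ \musup(K)=\max_j\muinf_j\Big(1+\limsup_n\tfrac{-\log\operatorname{dist}(n t_j/2,\Z)}{\log n}\Big)=\max_j\muinf_j\nu_j=\max_j\musup_j. \]
For the lower rate every summand is at least $\muinf_j$, so $\muinf(K)\ge\max_j\muinf_j$; conversely the orbit $n\mapsto(n t_j/2)_{t_j\ \mathrm{irrational}}$ equidistributes on a subtorus surjecting onto each circle factor, so it avoids the coordinate hyperplanes on an open dense set, and a positive density of $n$ satisfy $\operatorname{dist}(n t_j/2,\Z)\ge\delta$ for all irrational $t_j$ at once (the rational ones already contributing an $o(1)$ correction for every $n$). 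Along such $n$ all corrections vanish and $\tfrac{-\log\sigma_n}{\log n}\to\max_j\muinf_j$, giving $\muinf(K)=\max_j\muinf_j$.
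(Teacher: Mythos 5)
Your proposal is essentially correct and reaches the same two--stage structure as the paper---first reduce \(\sigma_n\) to the smallest positive value of \(|\Lambda|\) at \(n\)-th roots of unity, then do the Diophantine analysis---but both technical pillars are obtained by genuinely different means. For the comparison \(\sigma_n \asymp \min\{|\Lambda(\zeta)| : \zeta^n=1,\ \Lambda(\zeta)\neq 0\}\) the paper does not fix a presentation complex: it puts the Alexander matrix into Smith normal form over \(\Q[z^{\pm 1}]\), notes that the transforming matrices \(S,T\) induce bounded invertible operators on \(\ell^2\Z\) whose reductions mod \(z^n-1\) keep spectra in a fixed interval \([C^{-1},C]\), applies singular value inequalities, and then removes the higher reduced Alexander polynomials via the divisibility \(\Lambda_i \mid \Lambda\). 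Your Cauchy--Binet/exterior-power computation \(s_{g-1}(\zeta)\asymp |\Delta_1(\zeta)|/|\Delta_2(\zeta)|\) is a legitimate self-contained alternative; to make it airtight you should record that the zeros of \(\Delta_2\) on \(S^1\) are contained in the zeros of \(\Lambda\) (so \(D(\zeta)\) has full rank \(g-1\) wherever \(\Lambda(\zeta)\neq 0\) and the smallest \emph{positive} singular value really is \(s_{g-1}\)), and that the finitely many degenerate blocks \(D(\xi_j)\) are fixed matrices contributing only finitely many fixed positive singular values, hence are harmless in the minimum. For the Diophantine half, the paper obtains the subsequence realizing the lower shrinkage rate from a geometric lemma imported from the author's earlier work (infinitely many regular \(n\)-gons all of whose vertices are either equal to, or at distance \(\gtrsim 1/n\) from, the prescribed points), whereas you get the same subsequence from simultaneous equidistribution of \((n t_j/2)_j\) on the orbit-closure subgroup of the torus---this works since each coordinate hyperplane meets that subgroup in a proper closed, hence Haar-null, subgroup; your \(\limsup\) computation via \(\limsup_n \frac{-\log \operatorname{dist}(n t_j/2,\Z)}{\log n}=\nu_j-1\) is equivalent to the paper's direct use of the definition of the irrationality exponent. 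One small slip: for rational \(t_j\) there are infinitely many (not finitely many) \(n\) with \(\xi_j\in\mu_n\); what saves the argument is exactly your next remark, that the block at \(\xi_j\) has smallest positive singular value bounded below independently of \(n\).
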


In particular, \(\muinf(K)\) is always an integer which is not clear from the definition.  As expected, we confirm in Theorem~\ref{thm:relationtons} that \(\muinf(K)^{-1}\) equals the second Novikov--Shubin number of \(X_\infty\).  Theorem~\ref{thm:maintheorem} says that \(\muinf (K)\) can easily be read off from the Alexander polynomial.  However, to find out the upper shrinkage rate \(\musup(K)\) of a given knot \(K\),  depending on the Alexander polynomials, one runs into a notoriously hard problem of transcendence theory: computing the irrationality exponents of the ratio \(t_i = \frac{\log \xi_j}{\log (-1)}\) of two logarithms of algebraic numbers.  We explain that upper bounds for \(\musup(K)\) result from the theory of linear forms in logarithms.  But these bounds are typically large.  For the three-twist knot \(K = 5_2\), for instance, we obtain \(\muinf(K) = 1\) and \(2 \le \musup(K) \le \num{452131}\).

To see what our result says in practice, it is natural to group knots into three different types according to the qualitative shrinkage behavior.
  
\begin{definition}
  We say that the knot \(K\) has
  \begin{align*} 
     \textit{shrinkage type }
    \begin{cases}
      \textup{I} \\
      \textup{II}  \\
      \textup{III}
    \end{cases}
    \!\!\!\! \textup{if } K \textup{ has shrinkage rate }
    \begin{cases}
      \mu(K) = 0  \\
      \mu(K) > 0 \\
      \muinf(K) < \musup(K).
    \end{cases}
  \end{align*}\end{definition}

It is a curious fact that all three types occur among the two classical infinite families of prime knots.

\begin{theorem} \label{thm:twistandtorusknots}~ 
  \begin{enumerate}[(i)]
  \item Twist knots with even number of half-twists have shrinkage type~I.
  \item Nontrivial torus knots have shrinkage type~II.
  \item Twist knots with odd number \(\ge 3\) of half-twists have shrinkage type~III.
  \end{enumerate}
\end{theorem}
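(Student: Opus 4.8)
The plan is to reduce all three parts to a statement about the ordinary Alexander polynomial and then feed the location of its unit-circle roots into Theorem~\ref{thm:maintheorem}. The key preliminary observation is that both twist knots and nontrivial torus knots have knot groups with a two-generator one-relator presentation: twist knots because they are $2$-bridge, and torus knots via the standard presentation \(\langle x, y \mid x^p = y^q\rangle\). A two-generator presentation makes the Alexander module cyclic (deleting one generator column from the \(1 \times 2\) Fox Jacobian leaves a \(1 \times 1\) presentation), so the second elementary ideal is the whole ring and \(\Delta_2 = 1\). Hence \(\Lambda = \Delta_1 = \Delta\) is the ordinary Alexander polynomial in every case, and it suffices to find the roots of \(\Delta\) on the unit circle and decide, for each, whether the associated angle \(t_j\) is rational.

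Next I would record the Alexander polynomials. Writing \(k\) for the number of half-twists, a standard genus-one Seifert matrix computation gives
\[ \Delta(t) = j\,t - (2j+1) + j\,t^{-1} \quad (k = 2j), \qquad \Delta(t) = j\,t - (2j-1) + j\,t^{-1} \quad (k = 2j-1). \]
For part~(i) the even case has discriminant \((2j+1)^2 - 4j^2 = 4j+1 > 0\), so \(\Delta\) has two real roots; their product is \(1\), so they form a reciprocal pair, and neither equals \(\pm 1\) since \(\Delta(\pm 1) \neq 0\). Thus \(\Lambda\) has no roots on the unit circle and the first case of Theorem~\ref{thm:maintheorem} gives \(\mu(K) = 0\), i.e.\ shrinkage type~I.

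For part~(ii) I would use \(\Delta_{T(p,q)}(t) = \frac{(t^{pq}-1)(t-1)}{(t^p-1)(t^q-1)}\), a product of cyclotomic polynomials. A short multiplicity count using \(\gcd(p,q) = 1\) shows that every root is a \emph{simple} \(pq\)-th root of unity, so all roots lie on the unit circle and every angle \(t_j\) is rational with irrationality exponent \(\nu_j = 1\). Hence \(\muinf_j = \musup_j = 1\) for all \(j\), and since \(\deg \Delta = (p-1)(q-1) \ge 1\) for nontrivial \(T(p,q)\) there is at least one such root; Theorem~\ref{thm:maintheorem} then yields \(\muinf(K) = \musup(K) = 1 > 0\), i.e.\ shrinkage type~II.

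Finally, for part~(iii) the odd case with \(k = 2j - 1 \ge 3\), hence \(j \ge 2\), has discriminant \((2j-1)^2 - 4j^2 = 1 - 4j < 0\), giving a complex-conjugate pair of roots of product \(1\); they therefore lie on the unit circle at \(e^{\pm i\theta}\) with \(\cos\theta = \frac{2j-1}{2j}\). The single relevant angle is \(t_1 = \theta/\pi \in (0,\tfrac12)\) with simple root, so \(\muinf_1 = 1\). The decisive step is arithmetic: since \(\frac{2j-1}{2j} \in \Q \setminus \{0, \pm\tfrac12, \pm 1\}\) for \(j \ge 2\), Niven's theorem forces \(\theta/\pi\) to be irrational, so by the dichotomy recalled in Section~\ref{sec:excursion} the angle \(t_1\) is transcendental and \(\nu_1 \ge 2\). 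Thus \(\musup_1 = \nu_1 \muinf_1 \ge 2 > 1 = \muinf_1\), and Theorem~\ref{thm:maintheorem} gives \(\muinf(K) = 1 < \musup(K)\), i.e.\ shrinkage type~III. I expect the only genuine obstacle to lie exactly here—certifying that the unit-circle roots of the odd twist knots are never roots of unity—which is the transcendence-theoretic point that also distinguishes the trefoil (\(j = 1\), \(\cos\theta = \tfrac12\), rational angle, type~II) from all later odd twist knots.
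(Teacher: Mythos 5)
Your proposal is correct and follows essentially the same route as the paper: identify \(\Lambda\) with \(\Delta\) (the paper takes the reduced Alexander polynomials as given, whereas you justify \(\Delta_2 = 1\) via the two-generator one-relator presentations), locate the unit-circle roots of the stated quadratic and cyclotomic-quotient polynomials, and feed their multiplicities and angle-rationality into Theorem~\ref{thm:maintheorem}. The only substantive variation is in part~(iii), where you certify that the unit-circle root is not a root of unity via Niven's theorem applied to \(\cos\theta = \tfrac{2j-1}{2j}\), while the paper instead observes that \(nz^2-(2n-1)z+n\) is irreducible but not monic, hence not cyclotomic --- both are valid one-line arguments for the same key point.
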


More specifically, we talk about type \(\textup{II}_{\mu}\), \(\textup{III}_{\muinf}\) or \(\textup{III}_{\muinf}^{\musup}\) whenever we want to specify the corresponding parameters.  The observation that type~\(\textup{II}_\mu\) and type~\(\textup{III}_{\muinf}\) can only occur if \(\mu\) and \(\muinf\) are positive integers has a strong converse.

\begin{theorem} \label{thm:knotsofalltypesexist}
For every positive integer \(n\) there is a prime knot of type~\(\textup{II}_n\) and a prime knot of type~\(\textup{III}_n\).
\end{theorem}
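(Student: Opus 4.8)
The plan is to reduce the statement, via Theorem~\ref{thm:maintheorem}, to a condition on the unit-circle roots of the reduced Alexander polynomial, and then to realize the required root configuration by prime knots. Since Theorem~\ref{thm:maintheorem} gives \(\muinf(K) = \max_j \muinf_j\) and \(\musup(K) = \max_j \nu_j \muinf_j\), a prime knot has type~\(\textup{II}_n\) as soon as every unit-circle root \(\xi_j = e^{\pi\ima t_j}\) of \(\Lambda\) is a root of unity (so that \(t_j\) is rational and \(\nu_j = 1\)) and the maximal multiplicity equals \(n\); then \(\muinf(K) = \musup(K) = n\). Likewise it has type~\(\textup{III}_n\) as soon as \(\Lambda\) has a unit-circle root which is algebraic but not a root of unity, of multiplicity \(n\) equal to the maximal one; such a root has \(\nu_j \ge 2\), whence \(\muinf(K) = n\) while \(\musup(K) \ge 2n > n\).

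First I would fix the building blocks. The trefoil has reduced Alexander polynomial \(t^2 - t + 1\) with roots \(e^{\pm \pi\ima/3}\), primitive sixth roots of unity; the three-twist knot \(5_2\) has reduced Alexander polynomial \(2t^2 - 3t + 2\) with roots \(\tfrac14(3 \pm \ima\sqrt 7)\), which lie on the unit circle but, by Niven's theorem, are not roots of unity. To push the multiplicities up to \(n\) I would pass to the \(n\)-th powers
\[ \Lambda_{\textup{II}} = (t^2 - t + 1)^n \qquad\text{and}\qquad \Lambda_{\textup{III}} = (2t^2 - 3t + 2)^n. \]
Both are palindromic and equal \(1\) at \(t = 1\), hence are admissible as Alexander polynomials; the only unit-circle roots of \(\Lambda_{\textup{II}}\) are the sixth roots of unity with multiplicity \(n\), and the only unit-circle roots of \(\Lambda_{\textup{III}}\) are the non-cyclotomic numbers \(\tfrac14(3 \pm \ima\sqrt 7)\) with multiplicity \(n\) --- exactly the two configurations isolated above.

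The remaining --- and genuinely substantial --- step is to realize \(\Lambda_{\textup{II}}\) and \(\Lambda_{\textup{III}}\) as reduced Alexander polynomials of \emph{prime} knots. The naive solution is transparent but useless here: the \(n\)-fold connected sum of trefoils has Alexander polynomial \(\Lambda_{\textup{II}}\), and the \(n\)-fold connected sum of copies of \(5_2\) has Alexander polynomial \(\Lambda_{\textup{III}}\), yet connected sums of nontrivial knots are never prime. The whole content therefore lies in finding prime representatives. I would invoke a realization theorem asserting that every palindromic Laurent polynomial \(f\) with \(f(1) = \pm 1\) is the first Alexander polynomial \(\Delta_1\) of a prime knot whose Alexander module is cyclic; for such a knot \(\Delta_2 = 1\), so \(\Lambda = \Delta_1 = f\) and none of the unit-circle roots are cancelled. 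Applying this with \(f = \Lambda_{\textup{II}}\) and \(f = \Lambda_{\textup{III}}\) and reading off \(\muinf\) and \(\musup\) from Theorem~\ref{thm:maintheorem} yields prime knots of types \(\textup{II}_n\) and \(\textup{III}_n\).

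I expect this realization to be the main obstacle, since it must secure primality and control the Alexander module at the same time. Seifert's classical argument realizes any admissible \(f\) as some \(\Delta_1\) but says nothing about primality, while the first cyclic-module families one reaches for --- such as \(2\)-bridge knots --- have Alexander polynomials with coefficients in \(\{0,\pm 1\}\) and so cannot produce \(\Lambda_{\textup{II}}\) or \(\Lambda_{\textup{III}}\) once \(n \ge 2\). I therefore foresee two workable routes: either cite a strong realization result yielding hyperbolic (hence prime) knots with prescribed cyclic Alexander module, or begin with the composite connected-sum models and apply a primeness-producing modification --- for instance a localized tangle replacement --- chosen to leave the Alexander module unchanged, afterwards checking that \(\Delta_2\) stays trivial so that the root multiplicities survive in \(\Lambda\).
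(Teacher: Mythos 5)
Your reduction via Theorem~\ref{thm:maintheorem} and your choice of target polynomials \((z^2-z+1)^n\) and \((2z^2-3z+2)^n\) coincide exactly with the paper's. But the step you yourself flag as ``the main obstacle'' --- producing a \emph{prime} knot realizing these as reduced Alexander polynomials --- is left unexecuted: you ``would invoke a realization theorem'' for prime knots with prescribed cyclic Alexander module, or perform an unspecified tangle replacement, and neither route is carried out or cited. As it stands this is a genuine gap, since the conclusion of the theorem is precisely the existence of such prime knots.

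The paper closes this gap with an algebraic observation that you dismiss too quickly when you call the composite model ``useless here.'' One first uses Levine's realization theorem (Theorem~\ref{thm:levine}) to produce \emph{some} knot \(K\), prime or not, with \(\Lambda_1 = p^n\) and \(\Lambda_2 = 1\), so that \(H_1(X_\infty^K;\Q) \cong \Q[z^{\pm 1}]/(p^n)\) is cyclic with primary annihilator (here \(p\) is irreducible over \(\Q\)). Writing \(K = K_1 \# \cdots \# K_r\) for the prime decomposition, the Alexander module splits as the direct sum of those of the \(K_i\); but a cyclic module over a PID with primary annihilator is indecomposable, so all but one summand vanishes and a single prime factor \(K_{i_0}\) inherits \(\Lambda_1 = p^n\), \(\Lambda_2 = 1\). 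Primality comes for free from unique factorization of knots, with no need for a hyperbolic realization theorem or a tangle surgery. A side remark: your ``naive'' model, the \(n\)-fold connected sum of trefoils, has \emph{first} Alexander polynomial \((z^2-z+1)^n\) but Alexander module \(\bigl(\Q[z^{\pm1}]/(z^2-z+1)\bigr)^n\), hence reduced polynomial \(\Lambda = z^2-z+1\) and shrinkage rate \(1\); so even setting primality aside it would not give type~\(\textup{II}_n\). This underlines why the realization must control the full chain \(\Lambda_1, \Lambda_2\) (equivalently the module structure), which Levine's theorem does and which your proposal correctly demands but does not supply.
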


Stefan Friedl has informed us that invoking his short note \cite{Friedl:Realization} one can in fact see the stronger statement that all these types already occur among hyperbolic knots.  A list of the shrinkage types of prime knots up to eight crossings is included as Table~\ref{tab:knottypes} on page~\pageref{tab:knottypes}.  Sometimes our inability to compute irrationality exponents makes it impossible to detect the type of a knot.  Amongst the prime knots up to ten crossings this happens in precisely three cases: the knots \(10_{65}\), \(10_{77}\) and \(10_{82}\) as pictured in Figure~\ref{fig:whattype}.
\begin{figure}[htb]
  \includegraphics[width=11cm]{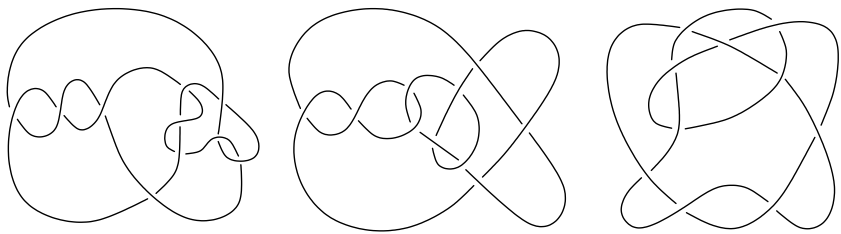}
  \caption{The knots \(10_{65}\), \(10_{77}\) and \(10_{82}\).  Are they type~\(\textup{II}_2\) or \(\textup{III}_2\)?}
  \label{fig:whattype}
\end{figure}
The knots \(10_{65}\) and \(10_{77}\) have \(\Lambda(z) =(z^2-z+1)^2(2z^2-3z+2)\) while \(10_{82}\) has \(\Lambda(z) = (z^2-z+1)^2(z^4-2z^3+z^2-2z+1)\).  Thus deciding if these knots are of type~II or~III is equivalent to deciding whether the transcendental numbers
\[ \frac{\log \left(\frac{3}{4}+\ima \frac{\sqrt{7}}{4}\right)}{\pi \ima} \quad \text{and} \quad \frac{\log \left(\frac{1-\sqrt{2}}{2} +\ima \frac{\sqrt{1+2 \sqrt{2}}}{2}\right)}{\pi \ima} \]
have irrationality exponent larger than two, where \(\log\) is the principal branch.

The outline of this article is as follows.  Section~\ref{sec:reducedalex} has expository character and recalls how reduced Alexander polynomials arise as invariant factors of the second cellular differential of \(X_\infty\).  With a view towards proving Theorem~\ref{thm:knotsofalltypesexist} we include Levine's solution to the realization problem for chains of (reduced) Alexander polynomials.  In Section~\ref{sec:shrinkagerate} we show that the shrinkage rates of the sequence \((\sigma_n)\) are determined by the reduced Alexander polynomials.  Section~\ref{sec:excursion} is an excursion to transcendence theory where we explain what upper bounds on irrationality exponents follow from current results on linear forms in logarithms.  In Section~\ref{sec:proof} we give the proof of Theorem~\ref{thm:maintheorem}.  As one important ingredient we cite a lemma from our earlier work \cite{Kammeyer:Approximating}.  There we had studied a similar finite-dimensional invariant, built out of the minimal singular value and its multiplicity, which more closely mimics the definition of Novikov--Shubin numbers.  In Section~\ref{sec:zoo} we prove the remaining two theorems announced in this introduction.  We give a list containing the shrinkage types of knots up to eight crossings to illustrate that all three types occur frequently.  We conclude the presentation with a discussion of some open questions on the distribution of types and whether these types have a chance to possess any further geometric significance. 

I wish to thank Stefan Friedl, Michel Laurent and Charles Livingston for helpful communication.

\section{Reduced Alexander polynomials}
\label{sec:reducedalex}

We remind the reader that every nonzero matrix \(A \in M(r,s;R)\) over a principal ideal domain \(R\) has a \emph{Smith normal form}.  This means there are invertible matrices \(S \in M(r,r; R)\) and \(T \in M(s,s;R)\) such that the product \(SAT\) has a very special form: there is \(1 \le k \le \min \{r,s\}\) and there are nonzero elements \(a_1, \ldots, a_k \in R\) satisfying \(a_{i+1} \mid a_i\) such that \(SAT\) is bulit from the diagonal \((k \times k)\)-matrix with entries \(a_1, \ldots, a_k \in R\) by adding zero columns and/or zero rows to turn it into an \((r \times s)\)-matrix.  The elements \(a_1, \ldots, a_k\) are unique up to association and are called the \emph{invariant factors} of~\(A\).  A homomorphism \(f \colon M \rightarrow N\) of abstract free, finite rank \(R\)-modules \(M\) and~\(N\) determines a transformation matrix up to multiplication by invertible matrices from left and right so that \(f\) has well-defined invariant factors.

\begin{lemma} \label{lemma:decompinvfactors}
  Let \(R\) be a principal ideal domain and let
  \[ C_2 \xrightarrow{d_2} C_1 \xrightarrow{d_1} C_0 \]
  be homomorphisms of free, finite rank \(R\)-modules such that \(d_1 \circ d_2 = 0\).  Then the homology \(H_1(C_*) = \ker d_1 / \im d_2\) is given by
  \begin{equation} \label{eq:decompinvfactors} H_1(C_*) \cong R^l \oplus R/(a_1) \oplus \cdots \oplus R/(a_k) \end{equation}
  where \(l = \rank \ker d_1 + \rank \ker d_2 - \rank C_2\) and where \(a_1, \ldots, a_k \in R\) are the invariant factors of \(d_2\).
\end{lemma}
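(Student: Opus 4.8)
The plan is to exploit the Smith normal form of \(d_2\) together with the structure theorem for finitely generated modules over a PID. First I would record the two standard facts that drive the argument: every submodule of a free finite rank \(R\)-module is again free of finite rank, and any short exact sequence with free quotient splits. Applying the second fact to \(d_1\), the image \(\im d_1\) is free since it sits inside the free module \(C_0\), so the sequence \(0 \to \ker d_1 \to C_1 \to \im d_1 \to 0\) splits. The crucial consequence is that \(\ker d_1\) is a \emph{direct summand} of \(C_1\); write \(C_1 = \ker d_1 \oplus W\) with \(W\) free of rank \(\rank C_1 - \rank \ker d_1\).

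Next, because \(d_1 \circ d_2 = 0\) we have \(\im d_2 \subseteq \ker d_1\). Quotienting the splitting above by \(\im d_2\), which lives entirely in the first summand, yields
\[ C_1/\im d_2 \cong (\ker d_1/\im d_2) \oplus W = H_1(C_*) \oplus W. \]
Thus \(C_1/\im d_2\) differs from \(H_1(C_*)\) only by the free summand \(W\); in particular the two modules share the same torsion submodule and hence the same torsion invariant factors.

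On the other hand, the Smith normal form computes \(C_1/\im d_2\) directly: choosing bases realizing the invariant factors \(a_1, \ldots, a_k\) of \(d_2\) gives
\[ C_1/\im d_2 \cong R/(a_1) \oplus \cdots \oplus R/(a_k) \oplus R^{\,\rank C_1 - k}. \]
Comparing this with the decomposition \(H_1(C_*) \oplus W\) and invoking the uniqueness part of the structure theorem, I would conclude that the torsion invariant factors of \(H_1(C_*)\) are exactly \(a_1, \ldots, a_k\), which produces the claimed torsion summands. The free rank \(l\) is then pinned down by counting: from \(H_1(C_*) \oplus W \cong C_1/\im d_2\) we read off \(l + \rank W = \rank C_1 - k\), and rank-nullity applied to \(d_2\) gives \(k = \rank \im d_2 = \rank C_2 - \rank \ker d_2\). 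Substituting \(\rank W = \rank C_1 - \rank \ker d_1\) and simplifying yields \(l = \rank \ker d_1 + \rank \ker d_2 - \rank C_2\).

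The only genuine subtlety — and the step I would treat most carefully — is the matching of invariant factors in the third paragraph. A priori the \(a_i\) describe \(\im d_2\) as a submodule of the \emph{whole} of \(C_1\), whereas the torsion of \(H_1(C_*)\) describes \(\im d_2\) as a submodule of \(\ker d_1\), and for an arbitrary nested pair of free modules these two lists of elementary divisors need not agree. What rescues the argument is precisely that \(\ker d_1\) is a direct summand of \(C_1\): the free complement \(W\) contributes only to the free part and never to the torsion, so the torsion data is insensitive to whether one works inside \(C_1\) or inside \(\ker d_1\). This is why establishing the splitting in the very first step is the real engine of the proof rather than a formality.
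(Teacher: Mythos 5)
Your proof is correct and rests on the same two pillars as the paper's: the Smith normal form of \(d_2\) computes \(C_1/\im d_2\), and the freeness of \(C_0\) (equivalently, of \(\im d_1\)) forces all of the torsion of that cokernel into \(H_1(C_*)\). The only real difference is in packaging: the paper expresses the second point by observing that the torsion submodule of \(\coker d_2\) must be killed by the induced map \(\overline{d_1}\colon \coker d_2 \to C_0\), whereas you realize it as an explicit splitting \(C_1 = \ker d_1 \oplus W\) --- the same hypothesis doing the same work, and your closing remark about why the torsion is insensitive to passing from \(C_1\) to \(\ker d_1\) is exactly the point the paper's one-line argument encapsulates.
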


\begin{proof}
  Since \(d_1 \circ d_2 = 0\), we have an induced homomorphism
  \[ \overline{d_1} \colon \coker d_2 \rightarrow C_0 \]
  and \(\ker \overline{d_1} = H_1(C_*)\).    Since \(C_0\) is free, the torsion submodule of \(\ker \overline{d_1}\) must be the whole torsion submodule of \(\coker d_2\).  The latter is isomorphic to \(R/(a_1) \oplus \cdots \oplus R/(a_k)\) as can be seen by choosing bases for \(C_2\) and \(C_1\) that turn \(d_2\) into Smith normal form.  The normal form also shows that \(\rank H_1(C_*)\) is given by
  \[ \rank \ker d_1 - \rank \im d_2 = \rank \ker d_1 -(\rank C_2 - \rank \ker d_2) = l. \qedhere \]
\end{proof}

The Smith normal form also easily implies the structure theorem of finitely generated \(R\)-modules which asserts that the decomposition in~\eqref{eq:decompinvfactors} is unique.  Thus the (nonunital) invariant factors of the differentials in a chain complex of free, finite rank \(R\)-modules are determined by the homology.

Let us now return to topology.  We equip the complement \(X = S^3 \setminus \nu K\) of our knot \(K \subset S^3\) with some fixed CW structure and assume that the unique infinite cyclic covering \(X_\infty \rightarrow X\) carries the lifted CW structure.  With this choice the cellular chain complex \((C_*(X_\infty; \Q), d_*)\) with rational coefficients consists of free, finite rank modules over the principal ideal domain of Laurent polynomials \(\Q[z^{\pm 1}]\) where we think of the variable \(z\) as generating the infinite cyclic group of deck transformations.

\begin{definition}
The \emph{reduced Alexander polynomials} \(\Lambda_1(z), \ldots, \Lambda_k(z) \in \Q[z^{\pm 1}]\) of the knot \(K\) are given by the invariant factors of the second cellular differential \(d_2 \colon C_2(X_\infty; \Q) \rightarrow C_1(X_\infty; \Q)\).
\end{definition}

If we agree that \(\Lambda_i(z) = 1\) for \(i > k\), then Lemma~\ref{lemma:decompinvfactors} implies that reduced Alexander polynomials are homotopy invariants of \(X\), thus knot invariants of \(K\).  The polynomials \(\Lambda_i(z)\) are only well-defined up to a factor \(a z^{\pm m}\) with \(a \in \Q^*\).  To obtain the more familiar (unreduced) \emph{Alexander polynomials} \(\Delta_i(z) \in \Z[z^{\pm 1}]\), pick lifts of the 2- and 1-cells in \(X\) to the covering \(X_\infty\).  This defines bases of \(C_2(X_\infty; \Q)\) and \(C_1(X_\infty; \Q)\) which turn \(d_2\) into a matrix with coefficients in the unique factorization domain \(\Z[z^{\pm 1}]\).  The (unreduced) \emph{\(i\)-th Alexander polynomial} \(\Delta_i(z) \in \Z[z^{\pm 1}]\) is the greatest common divisor of the \((k+1-i) \times (k+1-i)\)-minors of this matrix.  It is only defined up to \(\pm z^{\pm m}\).

Gauss's lemma implies that \(\Delta_i(z)\) is also the greatest common divisor of these minors when viewed as elements in \(\Q[z^{\pm 1}]\).  So we must have
\begin{equation} \label{eq:reducedunreduced} \Delta_i(z) = \Lambda_i(z) \cdots \Lambda_k(z) \end{equation}
up to \(a z^{\pm m}\) with \(a \in \Q^*\).  Note that the well-known property \(\Delta_i(1) = \pm 1\) implies that \(\Delta_i(z)\) is a \emph{primitive} polynomial: it has integer coefficients with greatest common divisor one.  It is therefore convenient to normalize each \(\Lambda_i(z)\) to be primitive as well.  For then the product \(\Lambda_i(z) \cdots \Lambda_k(z)\) is likewise primitive and \eqref{eq:reducedunreduced} holds true up to \(\pm z^{\pm m}\).  Finally, let us agree that if we drop the index as in \(\Lambda(z)\) and \(\Delta(z)\), we always refer to \(\Lambda_1(z)\) and \(\Delta_1(z)\).

Which finite sequences of primitive polynomials \(\Lambda_1(z), \ldots, \Lambda_k(z) \in \Z[z^{\pm 1}]\) occur as reduced Alexander polynomials of a knot?  A necessary condition comes from the definition: they need to form a chain of divisors in \(\Q[z^{\pm 1}]\) and thus in \(\Z[z^{\pm 1}]\) by Gauss's lemma.  As we have \(\Lambda_i(z) = \frac{\Delta_i(z)}{\Delta_{i+1}(z)}\), the two familiar properties of Alexander polynomials \(\Delta_i(1) = \pm 1\) and \(\Delta_i(z) = \Delta_i(z^{-1})\) (up to \(\pm z^{\pm m}\)) must also hold for the reduced versions.  Levine  showed that these three necessary conditions together are sufficient for the realization problem.

\begin{theorem}[Levine~\cite{Levine:Characterization}] \label{thm:levine}
  The primitive polynomials \(\Lambda_1(z), \ldots, \Lambda_k(z) \in \Z[z^{\pm 1}]\) occur as reduced Alexander polynomials of a knot if and only if
  \begin{enumerate}[(i)]
  \item \label{item:primitivity} \(\Lambda_i(1) = \pm 1\),
  \item \label{item:symmetry} \(\Lambda_i(z) = \Lambda_i(z^{-1})\) up to \(\pm z^{\pm m}\),
  \item \label{item:divisibility} \(\Lambda_i(z) \mid \Lambda_{i-1}(z)\)
  \end{enumerate}
  for all \(i = 1, \ldots, k\) (with \(\Lambda_0(z) = 0\)).
\end{theorem}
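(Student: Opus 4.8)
The necessity of the three conditions is already contained in the discussion preceding the theorem, so the real content is the converse realization statement and this is what I would concentrate on. Condition~(iii) holds because the \(\Lambda_i\) are the invariant factors of \(d_2\) and hence form a divisibility chain by construction, while (i) and (ii) follow from the standard properties \(\Delta_i(1) = \pm 1\) and \(\Delta_i(z) \doteq \Delta_i(z^{-1})\) of the unreduced polynomials together with the relation \(\Lambda_i = \Delta_i/\Delta_{i+1}\).

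For sufficiency my plan is to split the problem into single polynomials and then reassemble by connected sum. The starting point is the observation, implicit in Lemma~\ref{lemma:decompinvfactors}, that the rational Alexander module \(H_1(X_\infty;\Q)\) of a knot is a torsion \(\Q[z^{\pm 1}]\)-module whose invariant factors are exactly the reduced Alexander polynomials; since \(\Q[z^{\pm 1}]\) is a principal ideal domain these invariant factors are determined by the isomorphism type of the module. Hence it is enough to produce a knot \(K\) with
\[ H_1(X_\infty;\Q) \;\cong\; \Q[z^{\pm 1}]/(\Lambda_1) \oplus \cdots \oplus \Q[z^{\pm 1}]/(\Lambda_k). \]
Condition~(iii) guarantees that the right hand side is already written in invariant factor form, so such a \(K\) would have precisely \(\Lambda_1, \ldots, \Lambda_k\) as its reduced Alexander polynomials.

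First I would realize each \(\Lambda_i\) individually. By (i) and (ii) each \(\Lambda_i\) is symmetric with \(\Lambda_i(1) = \pm 1\), which is exactly the hypothesis of Seifert's realization theorem, so \(\Lambda_i\) is the first Alexander polynomial of a knot \(K_i\). What I actually need is the sharper statement that \(K_i\) can be chosen with a \emph{cyclic} rational Alexander module \(\Q[z^{\pm 1}]/(\Lambda_i)\); I would obtain this from a Seifert matrix \(V_i\) for \(\Lambda_i\) in companion form, for which the presentation matrix \(z V_i - V_i^{T}\) of the module reduces to a single nontrivial relation over \(\Q[z^{\pm 1}]\). Forming \(K = K_1 \# \cdots \# K_k\) then gives a knot whose Seifert matrix is the block sum \(V_1 \oplus \cdots \oplus V_k\) and whose rational Alexander module is correspondingly the direct sum \(\bigoplus_i \Q[z^{\pm 1}]/(\Lambda_i)\), as required.

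The step I expect to be the main obstacle is precisely this cyclicity refinement of Seifert's theorem. The bare realization theorem only controls the product \(\Delta_1 = \Lambda_i\) and says nothing about the finer module structure, and without cyclicity the summands could split further so that the invariant factors of the connected sum fail to be the prescribed \(\Lambda_i\). Making this work amounts to exhibiting, for each symmetric \(\Lambda_i\) with \(\Lambda_i(1) = \pm 1\), an integer matrix \(V_i\) in companion form with \(\det(z V_i - V_i^{T}) \doteq \Lambda_i\) and \(\det(V_i - V_i^{T}) = \Lambda_i(1) = \pm 1\), the latter equality being exactly what allows \(V_i\) to be realized as the Seifert matrix of an embedded surface in \(S^3\). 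Verifying that the palindromic coefficients supplied by (ii) can be assembled into such a matrix, and that the resulting module is genuinely cyclic, is the technical heart of the construction.
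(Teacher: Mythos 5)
Your proposal is correct and follows essentially the same route the paper sketches: it cites Levine for the full proof and then outlines exactly your argument via Gordon's observation that Seifert's construction yields a knot \(K^{\Lambda_i}\) with \emph{cyclic} Alexander module \(\Z[z^{\pm 1}]/(\Lambda_i)\), so that the connected sum \(K^{\Lambda_1} \# \cdots \# K^{\Lambda_k}\) has the prescribed invariant factors. The cyclicity refinement you flag as the technical heart is precisely the point the paper attributes to the Gordon/Seifert construction.
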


Gordon \cite{Gordon:SomeAspects} provides an alternative to Levine's proof cited above: one can take a closer look on Seifert's construction \cite{Seifert:Geschlecht}*{p.\,588-589} of a knot \(K^\Delta\) with arbitrarily prescribed Alexander polynomial \(\Delta(z) \in \Z[z^{\pm 1}]\) satisfying \(\Delta(1) = 1\) and \(\Delta(z) = \Delta(z^{-1})\).  It turns out that the Alexander module of Seifert's \(K^\Delta\) is actually cyclic: \(H_1(X_\infty; \Z) \cong \Z[z^{\pm 1}]/(\Delta(z))\).  Since the Alexander module of a knot sum is the direct sum of Alexander modules, the knot \(K^{\Lambda_1} \# \cdots \# K^{\Lambda_k}\) does the trick.

\section{Shrinkage rates and reduced Alexander polynomials}
\label{sec:shrinkagerate}

\noindent In this section we show that the shrinkage rates depend only on the reduced Alexander polynomials.  We start by taking a closer look on the definition of \(\sigma_n\).  Recall that the \emph{singular values} of a matrix \(M \in M(r,s;\C)\) are the square roots of the eigenvalues of the positive-semidefinite matrix \(M^* M\).  Clearly the singular values of \(M\) remain unaffected when \(M\) is multiplied by a unitary matrix from left or right.  For a finite CW complex Y we have isomorphisms \(C_p(Y; \C) \cong \C^{N_p}\) for the cellular chain modules with complex coefficients where \(N_p\) is the number of p-cells. These isomorphism are unique up to permutation and sign change of the \(\C\)-summands. As those are unitary transformations, the differentials in \(C_*(Y; \C)\) have well-defined singular values.

Let us now return to our knot complement \(X\) which we have equipped with some fixed CW structure.  By lifting skeleta from \(X\) we obtain a finite CW structure on the unique \(n\)-sheeted finite cyclic covering \(X_n\).  As explained, we are interested in the asymptotics of the sequence \((\sigma_n)\) where \(\sigma_n\) denotes the smallest positive singular value of the second cellular differential \(d^n_2 \colon C_2(X_n; \C) \rightarrow C_1(X_n; \C)\). 

\begin{definition}
  Two sequences \((a_n)\) and \((b_n)\) of positive real numbers are called \emph{dilatationally equivalent} if there is \(C \ge 1\) such that for all \(n\) we have
  \[ C^{-1} a_n \le b_n \le C a_n. \]
\end{definition}

It is apparent that dilationally equivalent sequences \((a_n)\) and \((b_n)\) have the same upper and lower shrinkage rate.  Moreover, \((a_n)\) is bounded from below if and only if \((b_n)\) is. 

\begin{proposition} \label{prop:dependencyonreducedalex}
The dilatation class of \((\sigma_n)_{n > 1}\) depends on the reduced Alexander polynomials of \(K\) only.
\end{proposition}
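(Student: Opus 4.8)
The plan is to show that the smallest positive singular value $\sigma_n$ of $d_2^n$ is, up to dilatation, computed entirely from the reduced Alexander polynomials $\Lambda_1, \ldots, \Lambda_k$, by diagonalizing the covering action via discrete Fourier analysis. First I would recall that the deck group of $X_\infty \to X$ is $\langle z \rangle \cong \Z$, so that $C_*(X_\infty; \C)$ is a chain complex of free $\C[z^{\pm 1}]$-modules, and that $C_*(X_n; \C)$ is obtained by the change of rings along $\C[z^{\pm 1}] \to \C[z^{\pm 1}]/(z^n - 1) \cong \prod_{\zeta^n = 1} \C$. Concretely, if $D_2(z)$ denotes the matrix of $d_2$ over $\C[z^{\pm 1}]$ (in the cellular bases), then after completing to a Hilbert space $C_2(X_n;\C)$ decomposes orthogonally into the $n$ eigenspaces of the regular representation of $\Z/n$, indexed by the $n$-th roots of unity $\zeta$, and on the $\zeta$-eigenspace the differential $d_2^n$ acts by the scalar-substituted matrix $D_2(\zeta)$. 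The key point, which I would verify, is that this decomposition is \emph{unitary}: the cellular inner products on $C_p(X_n;\C)$ come from declaring the lifted cells orthonormal, and the Fourier transform across the regular representation of the finite group $\Z/n$ is an isometry up to the harmless global scalar $\sqrt{n}$. Hence the singular values of $d_2^n$ are exactly the union over all $\zeta^n = 1$ of the singular values of the constant matrices $D_2(\zeta)$.

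The next step is to replace the singular values of $D_2(\zeta)$ by the absolute values of the reduced Alexander polynomials evaluated at $\zeta$. Over the field $\C$ the matrix $D_2(\zeta)$ can be brought into Smith normal form, and its nonzero invariant factors are precisely $\Lambda_1(\zeta), \ldots, \Lambda_{k(\zeta)}(\zeta)$, where $k(\zeta)$ is the number of $\Lambda_i$ not vanishing at $\zeta$. The products $\prod_{i} \Lambda_i(\zeta)$ recover the determinantal ideals, so the moduli $|\Lambda_i(\zeta)|$ control the singular values. What is true in general is that the product of all nonzero singular values of $D_2(\zeta)$ equals $\prod_i |\Lambda_i(\zeta)|$, and more importantly that the \emph{smallest} nonzero singular value of $D_2(\zeta)$ is dilatationally comparable to $\min_i |\Lambda_i(\zeta)|$ taken over those $i$ with $\Lambda_i(\zeta) \neq 0$. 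I would establish this comparison using a Courant--Fischer argument together with the fact that $D_2(z)$ has coefficients in $\C[z^{\pm 1}]$, so all its entries, and the unimodular transition matrices producing the Smith form, are bounded in operator norm uniformly in $\zeta$ as $\zeta$ ranges over the unit circle. Because $\Lambda_k \mid \cdots \mid \Lambda_1$, the minimum over $i$ is governed by $\Lambda_1 = \Lambda$, and it is exactly the roots of the $\Lambda_i$ \emph{on} the unit circle that can drive some $|\Lambda_i(\zeta)|$ toward zero as $\zeta$ approaches such a root.

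Combining the two steps, $\sigma_n$ is dilatationally equivalent to
\[ \min_{\zeta^n = 1} \ \min_{\substack{i \\ \Lambda_i(\zeta) \neq 0}} |\Lambda_i(\zeta)|, \]
where the inner minimum is over those invariant factors not vanishing at $\zeta$, and the outer minimum ranges over the $n$-th roots of unity. This quantity manifestly depends only on the reduced Alexander polynomials, which proves the proposition. The dilatation constant $C$ absorbs the global $\sqrt{n}$ arising from the non-normalized Fourier transform together with the uniform operator-norm bounds on the transition matrices; I would check that this $C$ can be chosen independent of $n$ precisely because the matrix $D_2(z)$ is fixed once and for all, so only its values on the compact unit circle enter.

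The main obstacle I anticipate is the second step: controlling the \emph{smallest} nonzero singular value of $D_2(\zeta)$ by $\min_i |\Lambda_i(\zeta)|$ uniformly in $\zeta$. Singular values are robustly related to invariant factors over a field, but the comparison constants can in principle degenerate as $\zeta$ varies, and one must rule this out. I expect the cleanest route is to fix a Smith decomposition $D_2(z) = S(z)\, \mathrm{diag}(\Lambda_i(z))\, T(z)$ over the field of fractions and then argue that, after clearing denominators, the entries of $S$, $T$ and their inverses extend to continuous functions on the unit circle away from the finitely many roots, and that near those roots the order of vanishing of the relevant $\Lambda_i$ exactly matches the order of the corresponding singular value. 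This local matching, handled pointwise on the compact circle and then patched, is the technical heart of the argument.
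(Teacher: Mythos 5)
Your overall architecture is sound and is essentially the paper's: one passes to a Smith normal form $SAT$ of the matrix of $d_2$ and compares smallest positive singular values via the standard inequalities for products with invertible factors; the Fourier diagonalization you put up front is exactly the paper's identification of the circulant matrix \eqref{eq:circulantmatrix} with the diagonal matrix of $n$-th roots of unity, which the paper defers to Proposition~\ref{prop:evaluationinreducedalex}. However, the step you yourself single out as the technical heart is resolved incorrectly in your sketch. You propose to take the Smith decomposition ``over the field of fractions'' and then clear denominators and do a local order-of-vanishing analysis near the roots of the $\Lambda_i$ on the unit circle. Over $\Q(z)$ every nonzero element is a unit, so the Smith form there is trivial (all invariant factors equal to $1$) and does not see the $\Lambda_i$ at all; and if you merely require $S,T$ to be invertible over $\Q(z)$, their entries and those of their inverses can have poles on the unit circle, so the comparison constants genuinely can degenerate and your ``local matching'' of orders of vanishing would have to carry the entire argument --- which your sketch does not actually do, and which is not obviously easier than the original problem.

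The correct and much simpler resolution, and the one the paper uses, is to take the Smith normal form over the principal ideal domain $\Q[z^{\pm 1}]$ itself: then $S$ and $T$ are invertible \emph{over} $\Q[z^{\pm 1}]$, i.e.\ $\det S$ and $\det T$ are units $a z^{m}$, so $S$, $T$, $S^{-1}$, $T^{-1}$ all have Laurent polynomial entries, are uniformly bounded in operator norm on the compact unit circle, and have smallest singular values uniformly bounded below there; no analysis near the roots is needed. (The paper phrases this by saying $SS^*$ and $TT^*$ are invertible positive operators on $(\ell^2\Z)^r$ and $(\ell^2\Z)^s$ with spectrum in $[C^{-1},C]$, and that reduction mod $z^n-1$ preserves these bounds.) Two smaller points: your claim that the product of the nonzero singular values of $D_2(\zeta)$ equals $\prod_i |\Lambda_i(\zeta)|$ is false in general, since conjugating by non-unitary invertible matrices changes this product --- fortunately you do not use it; and ``the invariant factors of $D_2(\zeta)$ over $\C$'' is not the right notion either, as over a field all invariant factors are $1$ --- what you mean is the evaluation at $\zeta$ of the Smith decomposition over $\Q[z^{\pm 1}]$. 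Finally, you should record why the ambiguity of the $\Lambda_i$ up to a factor $a z^{\pm m}$ is harmless (it amounts to multiplying by a scalar times a unitary), since the proposition asserts dependence on the reduced Alexander polynomials as knot invariants.
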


\begin{proof}
  For each \(2\)-cell and \(1\)-cell in \(X\) we pick a lift to \(X_\infty\).  By projection, this also fixes lifts in each finite covering \(X_n\).  We obtain isomorphisms \(C_2(X_\infty; \C) \cong \C[z^{\pm 1}]^{\,r}\) and \(C_1(X_\infty; \C) \cong \C[z^{\pm 1}]^{\,s}\) under which the differential \(d_2 \colon C_2(X_\infty; \C) \longrightarrow C_1(X_\infty; \C)\) is given by right multiplication with a matrix \(A \in M(r,s; \Z[z^{\pm 1}])\) and \(d^n_2 \colon C_2(X_n; \C) \longrightarrow C_1(X_n; \C)\) is given by right multiplication with the matrix \(A_n \in M(r,s; \Z[z]/(z^n-1))\) obtained from \(A\) by applying the canonical map \(\Z[z^{\pm 1}] \longrightarrow \Z[z]/(z^n-1)\) to the entries.

  Let \(SAT\) be the Smith normal form of \(A\) over \(\Q[z^{\pm1}]\) whose invariant factors are the normalized reduced Alexander polynomials of \(K\).  Clearly we have \((SAT)_n = S_n A_n T_n\) where for \(S\) and \(T\) matrix coefficients are reduced by \(\Q[z^{\pm 1}] \longrightarrow \Q[z]/(z^n-1)\).  Identifying the variable \(z\) with the cyclic, circulant \((n \times n)\)-matrix
  \begin{equation} \label{eq:circulantmatrix}
  \begin{pmatrix}
  0 & 0 & \cdots & 0 & 1 \\
  1 & 0 & & & 0 \\
  0 & 1 & \ddots & & \vdots \\
  \vdots & \ddots & \ddots & 0 & 0 \\
  0 & \cdots & 0 & 1 & 0
  \end{pmatrix}
  \end{equation}
  we can view \(S_n\), \(A_n\) and \(T_n\) as matrices in \(M(nr, nr, \Q)\), \(M(nr, ns, \Z)\) and \(M(ns, ns, \Q)\), repectively.  It is apparent that the smallest positive singular value \(\widetilde{\sigma}_n\) of the matrix \((SAT)_n\) depends on the normalized reduced Alexander polynomials of \(K\) only.  The latter are only unique up to multiplication by \(\pm z^{\pm k}\) but this would correspond to multiplying \((SAT)_n\) by some unitary matrix (from left or right) which leaves \(\widetilde{\sigma}_n\) unchanged.  So the \(\widetilde{\sigma}_n\) are well-defined and the theorem follows once we have shown that \((\widetilde{\sigma}_n)_{n>1}\) and \((\sigma_n)_{n>1}\) are dilatationally equivalent.  Since \(S\) and \(T\) are invertible over \(\Q[z^{\pm 1}]\), right multiplication with \(S S^*\) and \(T T^*\) define invertible positive bounded operators of \((\ell^2 \Z)^r\) and \((\ell^2 \Z)^s\), respectively.  Thus their spectrum is contained in an interval \([C^{-1}, C]\) for some \(C \ge 1\).  Since reducing matrix coefficients is a linear map bounded by one in \(L^1\)-norm, the matrices \(S_n S_n^*\) and \(T_n T_n^*\) have eigenvalues within \([C^{-1}, C]\) as well.  It follows that all singular values of \(S_n\) and \(T_n\) lie in \([C^{-\frac{1}{2}}, C^{\frac{1}{2}}]\).  By standard singular value inequalities as in \cite{Hogben:Handbook}*{24.4.7(c)} (see also \cite{Kammeyer:Approximating}*{Proposition~13}) we obtain \(C^{-1} \widetilde{\sigma}_n \le \sigma_n \le C \widetilde{\sigma}_n\).
\end{proof}

\section{An excursion to transcendence theory} \label{sec:excursion}

How well can a given real number be approximated by rationals?  An attempt to measure this is made by the notion of \emph{irrationality exponent}.  The irrationality exponent \(\nu(t)\) of a real number \(t \in \R\) is the least upper bound of all numbers \(\nu > 0\) for which there are infinitely many pairs of integers \((m,n)\) with \(n > 0\) satisfying
\[ 0 < \left|t - \frac{m}{n}\right| \le \frac{1}{n^{\nu}}. \]
It is easy to see that rational numbers have irrationality exponent one.  \emph{Liouville's theorem} says that an algebraic number of degree \(n \ge 2\) has irrationality exponent at most \(n\).  Thus numbers with infinite irrationality exponent, known as \emph{Liouville numbers}, must be transcendental.  It is easy to write down a Liouville number explicitly, such as \emph{Liouville's constant}
\[ \sum_{n \ge 1} \frac{1}{10^{n!}} = 0.1100010000000000000000010\ldots\, \]
and numbers of this type gave the first explicit decimal examples of numbers known to be transcendental.

There are no numbers with irrationality exponent \(1 < \nu < 2\) in view of \emph{Dirichlet's approximation theorem}:  For all \(t \in \R\) and for any integer \(N > 0\) there exist integers \(m\) and \(n\) with \(1 \le n \le N\) such that \(|nt -m| < \frac{1}{N}\), thus \(|t - \frac{m}{n}| < \frac{1}{n^2}\).  So the irrationality exponent of an irrational algebraic number of degree \(n\) must lie between \(2\) and \(n\).  The \emph{Thue--Siegel--Roth theorem} says that it is actually always~2.  An easy covering argument shows that also Lebesgue-almost all transcendental numbers have irrationality exponent~2.  On the other hand it is not hard to construct a (transcendental) number of any prescribed irrationality exponent \(\nu > 2\).  Y.\,Bugeaud \cite{Bugeaud:Cantor} even showed that one finds such an example within the classical middle third Cantor set.

While these qualitative statements are possible, in general it seems to be a very hard problem to compute the irrationality exponent of some given explicit transcendental number.  It is known that \(\nu(e) = 2\) but according to \cite{Weisstein:IrrationalityMeasure}, for numbers like \(\pi\), \(\pi^2\), \(\log 2\) and \(\zeta(3)\) the current best known upper bounds lie somewhere between \(2.5\) and \(7.7\).  Even worse, it was established recently that there are computable numbers with uncomputable irrationality exponent \cite{Becher-et-al:IrrationalityExponents}.

In a moment we will have reason to be interested in the irrationality exponent of real numbers \(t \in (0,1)\) with the property that \(\xi = e^{\pi \ima t}\) is an algebraic number.  Of course this is only interesting if \(t\) is irrational so that \(\xi\) is not a root of unity.  It turns out that then \(t\) must be transcendental as a consequence of the following result.

\begin{theorem}[Gelfond--Schneider]
Let \(\alpha\) and \(\beta\) be algebraic numbers with \(\alpha \neq 0,1\) and \(\beta\) irrational.  Then each value of \(\alpha^\beta\) is a transcendental number.
\end{theorem}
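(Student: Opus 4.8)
The plan is to run the classical \emph{auxiliary function} argument, proceeding by contradiction. Fix a branch of the logarithm and suppose, against the claim, that $\gamma = \alpha^\beta = e^{\beta \log \alpha}$ is algebraic. Put $K = \Q(\alpha, \beta, \gamma)$ and record the one arithmetic input that drives everything: for all integers $a, b$ the value $\alpha^{a+b\beta} = \alpha^{a}\gamma^{b}$ lies in $K$, as does $a + b\beta$. The strategy is to build an entire function that vanishes at many of the points $a + b\beta$, to force it by analytic means to vanish at even more such points, and then to collide this abundance of zeros with the fact that a nonzero element of $K$ cannot be arbitrarily small.

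First I would construct the auxiliary function. Following Schneider, set
\[ F(z) = \sum_{i=0}^{L}\sum_{j=0}^{L} c_{ij}\, z^{i}\,\alpha^{jz}, \qquad \alpha^{jz} = e^{jz\log\alpha}, \]
and look for rational integers $c_{ij}$, not all zero, making $F$ vanish at every point $a + b\beta$ with $0 \le a, b \le M$. This is a homogeneous linear system in the $(L+1)^{2}$ unknowns $c_{ij}$, with $(M+1)^{2}$ equations whose coefficients are algebraic numbers in $K$ of controlled height. Choosing $L$ and $M$ so that the unknowns outnumber the equations, \emph{Siegel's lemma} furnishes a nonzero integral solution whose size is bounded in terms of the parameters. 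It is here that the irrationality of $\beta$ enters: it guarantees that the evaluation points $a + b\beta$ are pairwise distinct, so that the imposed vanishing conditions are genuine.

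The core of the proof is the \emph{extrapolation}. Because $F$ is entire of order at most one and of type controlled by $L\lvert\log\alpha\rvert$, the maximum modulus principle combined with a Schwarz-type estimate---exploiting that $F$ already has many prescribed zeros---forces $\lvert F(a+b\beta)\rvert$ at a new lattice point to be extraordinarily small. On the other hand $F(a+b\beta)$ is an element of $K$ whose height is again bounded in the parameters, so a Liouville-type inequality bounds it from below whenever it is nonzero. For a suitable calibration the analytic upper bound undercuts the arithmetic lower bound, and $F(a+b\beta)$ must vanish. Iterating this bootstrap enlarges the vanishing set of $F$ without limit.

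Finally I would close the loop. Since $\log\alpha \neq 0$, the functions $z^{i}\alpha^{jz}$ are linearly independent over $\C$, so $F$ is not identically zero; yet an entire function of order at most one cannot have more zeros in a disk of radius $R$ than a bound linear in $R$ (Jensen's formula), which the iterated extrapolation eventually violates unless all $c_{ij}$ vanish---contradicting Siegel's lemma. I expect the genuine obstacle to lie not in any single step but in the simultaneous calibration of the degree $L$, the range $M$, the radii of the disks, and the number of extrapolation rounds, so that at every stage the Schwarz estimate stays ahead of the Liouville bound. This balancing of analytic smallness against arithmetic largeness is the technical heart on which the entire transcendence argument rests.
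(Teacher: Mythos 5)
The paper does not prove this theorem: it is quoted as a classical black box (it settled Hilbert's seventh problem) and used only through its corollary that \(t = \frac{\log\xi}{\pi\ima}\) is transcendental when \(\xi\) is algebraic and not a root of unity. So there is no in-paper argument to compare against; what you have written is an outline of Schneider's classical proof, and the architecture you describe --- auxiliary function \(F(z)=\sum c_{ij}z^{i}\alpha^{jz}\) vanishing on the lattice \(a+b\beta\), Siegel's lemma, Schwarz-lemma extrapolation against a Liouville lower bound, and a final zero-counting contradiction --- is indeed the correct skeleton, including the observation that irrationality of \(\beta\) is what makes the points \(a+b\beta\) distinct.

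As a proof, however, it has a genuine gap, and you name it yourself: the entire quantitative content is deferred. The choice of \(L\) and \(M\) (so that \((L+1)^2 > 2(M+1)^2\), say, with Siegel's lemma then controlling \(\max|c_{ij}|\) in terms of the heights and degrees of \(\alpha,\beta,\gamma\)), the radius in the Schwarz estimate, the explicit Liouville inequality for elements of \(K\) of bounded height, and the verification that the analytic upper bound actually undercuts the arithmetic lower bound are not side issues --- they are the theorem. Without them the sentence ``for a suitable calibration the analytic upper bound undercuts the arithmetic lower bound'' is an assertion, not an argument, and it is precisely the assertion whose truth is at stake. Two further points need care if you were to complete this. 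First, the closing zero-count must be quantitative: what you need is that an exponential polynomial \(\sum_{j}p_j(z)e^{jz\log\alpha}\) with \(\sum(\deg p_j+1)=(L+1)^2\) has at most \((L+1)^2-1+cLR\lvert\log\alpha\rvert\) zeros in \(\lvert z\rvert\le R\), and that the \((M'+1)^2\) forced zeros sit in a disk of radius only \(O(M')\), so the count is eventually violated; citing ``Jensen, linear in \(R\)'' hides the dependence on \(F(0)\) and on \(L\). Second, the iteration of the extrapolation step requires re-establishing the height bounds on \(F(a+b\beta)\) at every stage with uniform constants; in most treatments one instead calibrates a single extrapolation step to reach the contradiction, which is cleaner and avoids this bookkeeping.
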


The transcendence of \(t\) follows by setting \(\alpha = -1 = e^{\pi \ima}\) and \(\beta = t\).  So per se \(t\) could have any irrationality exponent \(\nu \ge 2\).  However, we can construct upper bounds on the irrationality exponent of \(t\) from the theory of linear forms in logarithms.  To explain this we observe that the Gelfond--Schneider theorem has the equivalent formulation that if (some values of) \(\log \alpha_1\) and \(\log \alpha_2\) are linearly independent over \(\Q\) for \(\alpha_1\) and \(\alpha_2\) nonzero algebraic, then \(\log \alpha_1\) and \(\log \alpha_2\) are also linearly independent over \(\overline{\Q}\).  For applications to Diophantine equations, however, it is not only important that the \emph{linear form in two logarithms}
\[ \beta_1 \log \alpha_1 + \beta_2 \log \alpha_2 \]
does not represent zero over \(\overline{\Q}\) nontrivially.  More so, explicit lower bounds of this form in terms of the degrees and heights of \(\beta_1\) and \(\beta_2\) are of interest.  For the problem we have in mind it suffices to consider the case when \(\beta_1\) and \(\beta_2\) are rational integers.

\begin{theorem}
  Let \(\alpha_1, \alpha_2 \neq 0, 1\) be algebraic numbers, let \(\beta_1, \beta_2\) be rational integers and let \(B = \max\{|\beta_1|, |\beta_2|\}\).  Then there is a constant \(C > 1\), which only depends on the heights and degrees of \(\alpha_1\) and \(\alpha_2\), such that
  \[|\beta_1 \log \alpha_1 + \beta_2 \log \alpha_2|  > B^{-C} \]
whenever \(\beta_1 \log \alpha_1 + \beta_2 \log \alpha_2\) is nonzero.
\end{theorem}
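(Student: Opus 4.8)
The plan is to establish the lower bound by the classical Gelfond--Baker method of auxiliary functions, run by contradiction. Write \(\Lambda = \beta_1 \log \alpha_1 + \beta_2 \log \alpha_2\) and suppose \(\Lambda \neq 0\) but \(|\Lambda| \leq B^{-C}\) for a constant \(C\) that we aim to contradict. The governing idea is that \(\Lambda\) being tiny forces the two one-parameter functions \(w \mapsto \alpha_1^{w}\) and \(w \mapsto \alpha_2^{w}\) to be ``almost'' proportional, which over-determines an auxiliary analytic function and then collides with the fact that nonzero algebraic numbers cannot be arbitrarily small.

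First I would construct the auxiliary function
\[ \Phi(w) = \sum_{\lambda_1 = 0}^{L} \sum_{\lambda_2 = 0}^{L} p(\lambda_1, \lambda_2)\, \alpha_1^{\lambda_1 w}\, \alpha_2^{\lambda_2 w}, \]
an exponential polynomial in \(w\) whose unknown coefficients \(p(\lambda_1, \lambda_2)\) are rational integers. By Siegel's lemma---a pigeonhole count of integer solutions to an underdetermined linear system---one can choose the \(p(\lambda_1, \lambda_2)\) not all zero, of controlled height, so that \(\Phi\) together with its first several derivatives vanishes at the integer points \(w = 0, 1, \ldots, T\). The box length \(L\), the derivative order, and the vanishing range \(T\) are the free parameters of the method and must be balanced against \(B\) and against the heights and degrees of \(\alpha_1, \alpha_2\).

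Next comes the analytic heart, an extrapolation step. Using \(\log \alpha_2 = (\Lambda - \beta_1 \log \alpha_1)/\beta_2\) one rewrites \(\alpha_2^{\lambda_2 w} = \alpha_1^{-(\beta_1/\beta_2)\lambda_2 w}\, e^{(\lambda_2/\beta_2)\Lambda w}\), so that on the relevant disc \(\Phi\) is uniformly close to a genuine polynomial in the single quantity \(\alpha_1^{w}\). A maximum-modulus / Schwarz-lemma argument then propagates the high-order vanishing already imposed at \(0, \ldots, T\) to forced smallness of \(\Phi\) and its derivatives on a much larger set of integers. Evaluating there produces algebraic numbers that the analytic bound makes smaller in modulus than the Liouville inequality---the statement that a nonzero algebraic number has absolute value bounded below in terms of its height and degree---can permit, unless those values vanish outright.

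The main obstacle is the zero estimate needed to close the loop: one must rule out that \(\Phi\) vanishes identically, or that all the extrapolated values are zero for trivial reasons, and this is precisely where the hypotheses \(\alpha_1, \alpha_2 \neq 0, 1\) and \(\Lambda \neq 0\)---equivalently the \(\overline{\Q}\)-linear independence of \(\log \alpha_1\) and \(\log \alpha_2\) supplied by the Gelfond--Schneider theorem cited above---enter decisively. Quantifying the non-vanishing while simultaneously optimizing \(L\), \(T\), and the derivative order so that the Siegel construction, the extrapolation, and the final Liouville contradiction are mutually compatible is the delicate bookkeeping that fixes the admissible constant \(C\). A cleaner route that bypasses much of this optimization is Laurent's interpolation-determinant method: instead of Siegel's lemma one bounds a determinant whose entries are derivatives of the monomials \(\alpha_1^{\lambda_1 w} \alpha_2^{\lambda_2 w}\) at the chosen points, estimating it analytically from above and arithmetically from below. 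In the two-logarithm case this yields the sharp explicit dependence of \(C\) on the heights and degrees that underlies the numerical bound quoted for the knot \(5_2\).
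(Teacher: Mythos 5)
First, a point of comparison: the paper does not prove this statement at all. It is quoted as a classical result (the ``little Baker theorem''), with the proof delegated to Feldman, to Baker--W\"ustholz, and, for the explicit constants used later, to Gouillon. So there is no in-paper argument to measure your proposal against; what you have written is an outline of the standard Gelfond--Baker machinery from those references, and as a roadmap it is accurate: auxiliary exponential polynomial, Siegel's lemma, analytic extrapolation exploiting the smallness of \(\Lambda\), a zero estimate, and a final collision with the Liouville inequality. The mention of Laurent's interpolation-determinant method as the route to sharp two-logarithm constants is also the historically correct attribution.

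That said, as a proof the proposal has a genuine gap, and it sits exactly where the content of the theorem lives. The statement asserts a constant \(C\) depending \emph{only} on the heights and degrees of \(\alpha_1,\alpha_2\), not on \(B\), while every parameter of your construction (the box size \(L\), the vanishing range \(T\), the derivative order, the extrapolation radius) must be chosen as a function of \(B\); showing that the optimization nevertheless closes with a \(B\)-independent exponent is the entire difficulty, and you defer it with the phrase ``delicate bookkeeping.'' Two further points are glossed over. First, the derivatives of \(\Phi\) at integer points are not algebraic numbers --- they carry powers of \(\log\alpha_1\) and \(\log\alpha_2\) --- so one must divide out \((\log\alpha_1)^k\) or restructure the auxiliary function before the Liouville inequality (a statement about algebraic numbers) can be invoked. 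Second, the non-vanishing needed to close the loop is not ``supplied by Gelfond--Schneider'': with \(\beta_1,\beta_2\) rational integers, \(\Lambda\neq 0\) is a hypothesis of the theorem, and what is actually required is a quantitative zero estimate for the exponential polynomial \(\Phi\), which is a separate algebraic argument. None of this makes the approach wrong --- it is precisely the approach of the literature the paper cites --- but the proposal is a description of a proof rather than a proof, for a theorem whose whole point is the effective constant.
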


The generalization of this theorem from two to any number of logarithms and with any algebraic numbers as coefficients \(\beta_i\) is the content of \emph{Baker's theorem}.  So let us refer to the above result as the \emph{little Baker theorem} and let us call \(C = C_{\alpha_1, \alpha_2}\) a \emph{Baker constant} for \(\alpha_1\) and \(\alpha_2\) even though in the above form the result is already contained in earlier work of N.\,I.\,Feldman~\cite{Feldman:Improvement}.  Now since \(t = \frac{\log \xi}{\log (-1)}\) is the ratio of two logarithms as above, we conclude
\[ \left|t - \frac{m}{n}\right| = \frac{\left| n \,\log \xi - m\,\log(-1) \right|}{\pi n}  > \frac{1}{\pi n^{C+1}} \]
where we could assume \(0 \le m \le n\) because \(0 < t < 1\).  It follows that \(\nu(t) \le C+1\).  A lot of work has been dedicated in the literature to find explicit values of Baker constants.  To give the reader an impression of a typical result we extract from \cite{Baker-Wuestholz:LogarithmicForms}*{p.\,20} that in our setting
\[ C = 2^{40} d^8 \log H \]
is a possible choice for \(C\).  Here \(d\) is the degree of \(\xi\) and \(H\) is the \emph{height} of \(\xi\): the maximal absolute value of the relatively prime integer coefficients of the minimal polynomial of \(\xi\).  The factor \(2^{40}\) has order of magnitude \(10^{12}\) (one trillion).  Substantial improvements are possible by using more specifically that we are dealing with two logarithms only.  A result in this direction is due to N.\,Gouillon~\cite{Gouillon:ExplicitLowerBounds}*{Corollary~2.2 and below} which implies in our situation that for large \(n\) we have the bound \(|n \log \xi - m \log (-1)| > \textup{const} \ n^{-C}\) with
\[ C = 550 \pi \,d^2 \left(3.776 + 2.662 \,d+ 0.946 \,d \log d \right) \max \left\{ 2\pi, \log M(p_\xi) \right\} \]
where \(\log M(p_\xi)\) is the \emph{logarithmic Mahler measure} of the minimal polynomial \(p_\xi\) of \(\xi\),
\[ \log M(p_\xi) = \frac{1}{2\pi} \int_{0}^{2\pi} \log(|p_\xi(e^{i\theta})|)\, \diff \theta. \]
The value \(\frac{\log M(p_\xi)}{d}\) is also known as the \emph{absolute logarithmic height} of \(\xi\).  For a completely explicit example let us consider \(\xi = \frac{1}{4}(3 + \ima \sqrt{7})\) which has \(p_\xi(z) = 2z^2 -3z +2\) so that the above formula gives that
\[ C = C_{\xi, -1} = \num{452130} \]
is a Baker constant for \(\xi\) and \(-1\).  It should be possible to push things still further and get smaller constants in particular cases by going into the details of the main result in \cite{Gouillon:ExplicitLowerBounds} or by adapting a recent paper by Bugeaud~\cite{Bugeaud:Effective} from positive rational numbers to more general algebraic numbers.  But there does not seem to be much point in it because one cannot expect to get anywhere near the correct value of the irrationality exponent of \(t\) with this method.  So let us fix in a theorem what we should take home from this excursion.

\begin{theorem} \label{thm:excursionsummary}
Let \(t \in (0,1)\) be such that \(\xi = e^{\pi \ima t}\) is an algebraic number.  Then \(t\) is either rational or transcendental.  If \(t\) is rational, we have \(\nu(t) = 1\).  If \(t\) is transcendental, we have \(\nu(t) \ge 2\) but \(t\) is never a Liouville number: there are upper bounds for \(\nu(t)\) in terms of the degree and height of \(\xi\).
\end{theorem}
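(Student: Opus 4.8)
The plan is to assemble the four assertions directly from the results recalled in this section; the content of the theorem is really the bookkeeping that organizes them. I would begin with the dichotomy ``rational or transcendental'' by ruling out the algebraic irrational case via Gelfond--Schneider. Suppose for contradiction that \(t\) is algebraic and irrational. Setting \(\alpha = -1\) (algebraic, \(\neq 0,1\)) and \(\beta = t\) (algebraic, irrational), the theorem gives that \(\alpha^\beta = e^{\pi \ima t} = \xi\) is transcendental, contradicting the hypothesis that \(\xi\) is algebraic. Hence \(t\) is either rational or transcendental.

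Next come the two easy exponent bounds. If \(t\) is rational then \(\nu(t) = 1\) as noted at the start of the section. If \(t\) is transcendental it is in particular irrational, so Dirichlet's approximation theorem yields infinitely many \(\frac{m}{n}\) with \(\left|t - \frac{m}{n}\right| < \frac{1}{n^2}\) and hence \(\nu(t) \ge 2\).

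The substantive point is that a transcendental \(t\) is never a Liouville number, with an explicit bound on \(\nu(t)\). Here I would invoke the little Baker theorem for the algebraic numbers \(\alpha_1 = \xi\) and \(\alpha_2 = -1\), both \(\neq 0,1\), producing a Baker constant \(C = C_{\xi,-1}\) depending only on the heights and degrees of \(\xi\) and \(-1\) --- equivalently, since \(-1\) is fixed, only on the degree and height of \(\xi\). Writing \(t = \frac{\log \xi}{\log(-1)}\) and taking any rational \(\frac{m}{n}\) with \(n > 0\), I compute
\[ \left| t - \frac{m}{n} \right| = \frac{\left| n \log \xi - m \log(-1) \right|}{\pi n}. \]
Because \(t\) is irrational, the linear form \(n \log \xi - m \log(-1) = \pi \ima (nt - m)\) never vanishes, so the little Baker theorem bounds the numerator below by \(B^{-C}\) with \(B = \max\{|m|,|n|\}\). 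For approximations good enough to affect \(\nu(t)\) we may assume \(\frac{m}{n}\) lies near \(t \in (0,1)\), so that \(0 \le m \le n\) and \(B = n\); this gives
\[ \left| t - \frac{m}{n} \right| > \frac{1}{\pi n^{C+1}}, \]
whence \(\nu(t) \le C+1 < \infty\) and \(t\) is not a Liouville number.

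There is no serious obstacle once the section's machinery is in place; the only points requiring care are verifying the hypotheses of the little Baker theorem --- that \(\alpha_1, \alpha_2 \neq 0,1\) and, crucially, that the linear form is nonzero, which is exactly where irrationality of \(t\) enters --- together with the harmless normalization \(0 \le m \le n\) that replaces \(B\) by \(n\). Everything else is a matter of quoting Gelfond--Schneider, Dirichlet, and the rational case in the right order.
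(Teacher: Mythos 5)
Your proposal is correct and follows essentially the same route as the paper, which presents exactly this argument (Gelfond--Schneider for the dichotomy, Dirichlet for \(\nu(t)\ge 2\), and the little Baker theorem applied to \(t=\frac{\log\xi}{\log(-1)}\) for the upper bound) in the running text of Section~\ref{sec:excursion} rather than in a formal proof environment. The only point you make explicit that the paper leaves implicit is that irrationality of \(t\) guarantees the linear form is nonzero, which is a worthwhile clarification but not a difference in method.
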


The results behind this theorem are by now classical but be aware that they are deep.  The Gelfond--Schneider theorem famously settled Hilbert's seventh problem while Baker's theorem---as well as the Thue--Siegel--Roth theorem---was worth a Fields medal.

\section{Proof of the main theorem}
\label{sec:proof}

Our main result Theorem~\ref{thm:maintheorem} explains how to read off shrinkage rates of knots from the reduced Alexander polynomial.  We remind the reader that \(\Lambda(1) = \pm 1 \neq 0\) and also \(\Lambda(-1) \neq 0\) because \(\Lambda\) divides \(\Delta\) and \(\Delta(-1)\) is the determinant of the knot which is known to be an odd integer.  So 
\[ \xi_1 = e^{\pi \ima t_1}, \,\ldots\,,\, \xi_u = e^{\pi \ima t_u} \quad \text{with} \quad t_1, \ldots, t_u \in (0,1) \]
 and their conjugates \(\overline{\xi}_1, \ldots, \overline{\xi}_u\) are all roots of \(\Lambda\) on the unit circle.  As a first step towards the proof we show the following sharpening of Proposition~\ref{prop:dependencyonreducedalex} which already shows that only the first reduced Alexander polynomial \(\Lambda\) is relevant for shrinkage rates.

\begin{proposition} \label{prop:evaluationinreducedalex}
  For \(n \gg 1\) let \(\widehat{\sigma}_n\) be the smallest positive value that the function \(z \mapsto |\Lambda(z)|\) attains at an \(n\)-th root of unity.  Then \((\widehat{\sigma}_n)_{n>1}\) and \((\sigma_n)_{n>1}\) are dilatationally equivalent.
\end{proposition}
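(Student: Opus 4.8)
By Proposition~\ref{prop:dependencyonreducedalex} it is enough to compare \((\widehat\sigma_n)\) with the sequence \((\widetilde\sigma_n)\) of smallest positive singular values of \((SAT)_n\), so the plan is to first make the latter completely explicit. Since \(SAT\) is in Smith normal form, it is a diagonal matrix padded by zero rows and columns whose nonzero diagonal entries are the normalized reduced Alexander polynomials \(\Lambda_1,\dots,\Lambda_k\). Substituting the circulant matrix from~\eqref{eq:circulantmatrix} for \(z\) therefore turns \((SAT)_n\) into a matrix that is block diagonal up to the zero padding, with diagonal blocks \(\Lambda_i(C_n)\), where \(C_n\) denotes the circulant. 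The zero padding contributes only vanishing singular values, so \(\widetilde\sigma_n\) is the smallest positive singular value occurring among the blocks \(\Lambda_i(C_n)\).

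Next I would diagonalize. The circulant \(C_n\) is the unitary cyclic shift and is diagonalized by the discrete Fourier transform with the \(n\)-th roots of unity \(\omega^j\), \(\omega = e^{2\pi\ima/n}\), as eigenvalues. Hence each \(\Lambda_i(C_n)\), being a Laurent polynomial in a normal matrix, is normal and unitarily conjugate to \(\operatorname{diag}(\Lambda_i(\omega^0),\dots,\Lambda_i(\omega^{n-1}))\); its singular values are therefore exactly the numbers \(|\Lambda_i(\omega^j)|\). This gives
\[ \widetilde\sigma_n = \min\bigl\{\,|\Lambda_i(\omega^j)| : 1\le i\le k,\ 0\le j\le n-1,\ |\Lambda_i(\omega^j)|>0\,\bigr\}, \]
whereas \(\widehat\sigma_n\) is the same minimum taken over \(i=1\) alone (well defined since \(\Lambda(1)=\pm1\neq0\)). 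As \(\Lambda=\Lambda_1\) is one of the blocks, the minimum defining \(\widetilde\sigma_n\) ranges over a superset, so \(\widetilde\sigma_n\le\widehat\sigma_n\) comes for free.

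For the reverse estimate I would use that the invariant factors form a chain of divisors, so \(\Lambda_i\mid\Lambda\) and \(\Lambda=\Lambda_iQ_i\) with \(Q_i\in\Q[z^{\pm1}]\). Each \(|Q_i|\) is bounded on the compact unit circle by some \(C_i\ge1\). Thus, whenever the pair \((i,j_0)\) realizing \(\widetilde\sigma_n\) happens to satisfy \(\Lambda(\omega^{j_0})\neq0\), we obtain \(\widehat\sigma_n\le|\Lambda(\omega^{j_0})|=|Q_i(\omega^{j_0})|\,|\Lambda_i(\omega^{j_0})|\le C_i\,\widetilde\sigma_n\), and \(C=\max_i C_i\) yields dilatational equivalence.

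The hard part is the resonance case \(\Lambda_i(\omega^{j_0})\neq0\) but \(\Lambda(\omega^{j_0})=0\), where \(\omega^{j_0}\) lands exactly on a root \(\xi_l\) of \(\Lambda\) that is a root of unity but not a root of \(\Lambda_i\); then \(Q_i(\omega^{j_0})=0\) and the estimate collapses. The resolution exploits that \(|\Lambda_i(\xi_l)|\) is then a fixed positive number independent of \(n\), and that there are only finitely many such numbers as \(i\) and the root-of-unity roots \(\xi_l\) vary. If \(\Lambda\) has no root on the unit circle then neither does any divisor \(\Lambda_i\), all \(|\Lambda_i(\omega^j)|\) stay between two fixed positive constants, and both sequences are trivially dilatationally equivalent. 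If \(\Lambda\) does have a root on the circle, then \(\widehat\sigma_n\to0\) and hence \(\widetilde\sigma_n\le\widehat\sigma_n\to0\); so for \(n\gg1\) the value \(\widetilde\sigma_n\) drops below every one of those finitely many fixed numbers, which rules out the resonance case and forces \(\Lambda(\omega^{j_0})\neq0\). This dichotomy mirrors the case split in Theorem~\ref{thm:maintheorem}, and cleanly disposing of the resonances is exactly where the argument needs the most care.
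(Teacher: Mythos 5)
Your argument is correct and follows essentially the same route as the paper: reduce to \(\widetilde{\sigma}_n\) via Proposition~\ref{prop:dependencyonreducedalex}, diagonalize the circulant to identify \(\widetilde{\sigma}_n\) as the smallest positive value of the \(|\Lambda_i|\) at \(n\)-th roots of unity, and compare with \(\widehat{\sigma}_n\) using that each \(\Lambda_i\) divides \(\Lambda\) with quotient bounded on the circle. Your explicit treatment of the resonance case is a careful elaboration of the step the paper dispatches with the remark that \(\Lambda(e^{2\pi\ima l/n})\neq 0\) for large \(n\).
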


\begin{proof}
  We retain the notation from the proof of Proposition~\ref{prop:dependencyonreducedalex}.  There we had already seen that \((\sigma_n)_{n>1}\) and \((\widetilde{\sigma}_n)_{n>1}\) are dilatationally equivalent where \(\widetilde{\sigma}_n\) is the smallest positive singular value of the matrix \((SAT)_n\).  Here \((SAT)_n \in M(r,s,\Z[z]/(z^n-1))\) is viewed as a matrix in \(M(nr, ns; \Z)\) by replacing the generator \(z\) with the cyclic \((n \times n)\)-matrix~\eqref{eq:circulantmatrix}.  The latter matrix is unitarily equivalent to the diagonal matrix whose entries are the \(n\)-th roots of unity.  It follows from the spectral mapping theorem that \(\widetilde{\sigma}_n\) is the smallest positive element of the set
  \[ \left\{ \left|\Lambda_i(e^{\frac{2 \pi \ima l}{n}})\right|\, \colon \ i=1, \ldots, k; \ l=0, \ldots, n-1 \right\}. \]
  Next we argue that we can disregard the higher reduced Alexander polynomials.  By definition \(\widehat{\sigma}_n\) denotes the smallest positive element of the subset
  \[ \left\{ \left|\Lambda(e^{\frac{2 \pi \ima l}{n}})\right|\, \colon \ l=0, \ldots, n-1 \right\} \]
of the above set where \(i = 1\) is fixed.  Since each \(\Lambda_i\) divides \(\Lambda\), we can consider the constant
  \[ D = \max_{1 \le i \le k} \sup_{|z|=1} \left| \frac{\Lambda(z)}{\Lambda_i(z)} \right| \ge 1.\]
  For given \(n\) let \(i = i(n)\) and \(l = l(n)\) be such that \(\widetilde{\sigma}_n = |\Lambda_i(e^{\frac{2 \pi \ima l}{n}})|\).  Since \(\Lambda\) has only finitely many roots, we must also have \(\Lambda(e^{\frac{2 \pi \ima l}{n}}) \neq 0\) whenever \(n\) is large enough.  Thus for large \(n\) we have
  \[ \widetilde{\sigma}_n = \left| \frac{\Lambda_i(e^{\frac{2\pi \ima l}{n}})}{\Lambda(e^{\frac{2\pi \ima l}{n}})} \right| \left| \Lambda(e^{\frac{2\pi \ima l}{n}})\right| \ge D^{-1} \left|\Lambda(e^{\frac{2\pi \ima l}{n}})\right| \ge D^{-1} \widehat{\sigma}_n. \]
  Thus we have \(\widetilde{\sigma}_n \le \widehat{\sigma}_n \le D \widetilde{\sigma}_n\) which completes the proof.
\end{proof}

The proposition says that small positive values of \(|\Lambda(z)|\) at \(n\)-th roots of unity determine the shrinkage rates.  At least for big \(n\) the function \(|\Lambda(z)|\) will take its smallest positive value at an \(n\)-th root of unity which is close to one of the zeros of \(\Lambda\).  For our purpose it is thus of interest to find estimates on the distances of \(n\)-th roots of unity from a given distribution of points on the unit circle.  A key result in this direction says that for any such distribution of points there are infinitely many regular \(n\)-gons each of whose vertices either coincides with one of the given points or is relatively far away from all of them.  This is made precise by the following lemma.

\begin{lemma} \label{lemma:farawayfromzeros}
  Let \(z_1, \ldots, z_u \in S^1\) be distinct points on the unit circle.  Then there is a constant \(0 < R < \frac{1}{2}\) and there are infinitely many positive integers \(n_j\) such that for each \(t = 1, \ldots, u\) and \(k = 1, \ldots, n_j\) either
  \[ z_t = e^{\frac{2 \pi \ima k}{n_j}} \quad \text{or} \quad \left|z_t - e^{\frac{2 \pi \ima k}{n_j}}\right| \ge 2 \sin \left(\frac{R \pi}{n_j}\right). \]
\end{lemma}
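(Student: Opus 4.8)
The plan is to reduce the claim to a Diophantine statement about the multiples \(n\theta_t\) and then to produce the integers \(n_j\) by an equidistribution argument. Write \(z_t = e^{2\pi\ima\theta_t}\) with \(\theta_t\in[0,1)\) and let \(\lVert x\rVert\) denote the distance of \(x\in\R\) to the nearest integer. From the identity \(\lvert e^{2\pi\ima a}-e^{2\pi\ima b}\rvert = 2\lvert\sin(\pi(a-b))\rvert\) one sees that the distance of \(z_t\) to the nearest \(n\)-th root of unity equals \(2\sin(\pi\lVert n\theta_t\rVert/n)\). Since \(\sin\) is increasing on \([0,\pi/2]\) while \(\lVert n\theta_t\rVert\le\tfrac12\) and \(R<\tfrac12\), the required alternative holds for every \(k=1,\dots,n\) if and only if, for the given \(t\), either \(\lVert n\theta_t\rVert=0\) (equivalently \(z_t\) is an \(n\)-th root of unity, so the first alternative occurs and every other root sits at distance \(\ge 2\sin(\pi/n)\)) or \(\lVert n\theta_t\rVert\ge R\). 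Thus it suffices to find \(R\in(0,\tfrac12)\) and infinitely many \(n\) such that for every \(t\) one has \(\lVert n\theta_t\rVert=0\) or \(\lVert n\theta_t\rVert\ge R\).

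Each \(\theta_t\) is either rational or irrational, and I would dispose of the rational ones first. If \(\theta_t=p_t/q_t\) in lowest terms, then \(\lVert n\theta_t\rVert=0\) when \(q_t\mid n\) and \(\lVert n\theta_t\rVert\ge 1/q_t\) otherwise; hence a rational point satisfies the dichotomy for \emph{every} \(n\), provided we keep \(R\le 1/q_t\). It therefore remains only to control the set \(B\) of indices \(t\) with \(\theta_t\) irrational, where \(\lVert n\theta_t\rVert\neq0\) always, so that the second alternative \(\lVert n\theta_t\rVert\ge R\) genuinely has to be arranged.

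For the irrational coordinates I would pass to the torus. Set \(\psi=(\theta_t)_{t\in B}\in(\R/\Z)^{B}\) and let \(H=\overline{\langle\psi\rangle}\) be the closure of the cyclic group it generates, a closed subgroup of the torus. By Weyl's criterion the sequence \((n\psi)_{n\ge1}\) equidistributes in \(H\) for the normalized Haar measure \(\mu_H\): every nontrivial character \(\chi\) of \(H\) has \(\chi(\psi)\neq1\) (otherwise \(\chi\) would be trivial on the dense subgroup \(\langle\psi\rangle\)), so \(\tfrac1N\sum_{n\le N}\chi(\psi)^n\to0\). The decisive point is that, because each single \(\theta_t\) with \(t\in B\) is irrational, the coordinate projection \(\pi_t\colon H\to\R/\Z\) is onto; being a surjective continuous homomorphism of compact groups it carries \(\mu_H\) to Haar measure on \(\R/\Z\). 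Hence each set \(\{x\in H:\lVert x_t\rVert<R\}\) has \(\mu_H\)-measure \(2R\), and their union over \(t\in B\) has measure at most \(2R\,|B|\). Choosing \(R\in(0,\tfrac12)\) small enough that \(R\le 1/q_t\) for every rational \(\theta_t\) and, if \(B\neq\emptyset\), also \(R<1/(2|B|)\), the open set \(U=\{x\in H:\lVert x_t\rVert>R\text{ for all }t\in B\}\) has positive measure and boundary of measure zero; equidistribution then yields infinitely many \(n\) with \(n\psi\in U\). For each such \(n\) every irrational coordinate obeys \(\lVert n\theta_t\rVert>R\), while every rational coordinate obeys the dichotomy automatically, so these \(n\) are the desired \(n_j\).

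The step I expect to be the main obstacle is the simultaneity over all points in the presence of possible \(\Q\)-linear relations among the irrational \(\theta_t\). Such relations confine the orbit \((n\psi)\) to a proper closed subgroup \(H\subsetneq(\R/\Z)^{B}\), so one cannot simply equidistribute in the full torus and land in a box bounded away from every coordinate hyperplane. What rescues the argument is that irrationality of each individual \(\theta_t\) forces \(H\) to surject onto every coordinate circle; this is precisely what keeps each bad set of measure \(2R\), rather than acquiring an atom of mass \(1/q_t\) at the origin as a rational coordinate would, and it is the reason the rational points must be separated off beforehand.
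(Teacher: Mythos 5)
Your argument is correct. Note first that the paper itself contains no proof of this lemma: it is imported as a special case of Proposition~12 of \cite{Kammeyer:Approximating}, so there is no in-text argument to compare against and your write-up serves as a genuinely self-contained substitute. The reduction to the Diophantine statement ``for every \(t\), either \(\lVert n\theta_t\rVert=0\) or \(\lVert n\theta_t\rVert\ge R\)'' is exact: the nearest \(n\)-th root of unity to \(z_t\) lies at distance \(2\sin(\pi\lVert n\theta_t\rVert/n)\), distinct \(n\)-th roots of unity are at mutual distance at least \(2\sin(\pi/n)>2\sin(R\pi/n)\), and \(\sin\) is monotone on the relevant range, so the lemma's dichotomy for all \(k\) is equivalent to your condition. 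The rational angles are correctly disposed of by \(\lVert np/q\rVert\in\{0\}\cup[1/q,1]\), and the two points that actually carry the proof are both in place: every nontrivial character of \(H=\overline{\langle\psi\rangle}\) is nontrivial at \(\psi\), so \((n\psi)\) equidistributes for \(\mu_H\) by Weyl's criterion on the compact abelian group \(H\); and irrationality of each \(\theta_t\), \(t\in B\), forces the coordinate projection \(H\to\R/\Z\) to be onto (its image is closed and contains the dense set \(\langle\theta_t\rangle\)), so the pushforward of \(\mu_H\) is Lebesgue and each bad set has measure exactly \(2R\) irrespective of rational dependences among the \(\theta_t\) --- this is indeed the crux, as you say. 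With \(R<1/(2|B|)\) the good set \(U\) is open of positive measure, and the standard \(\liminf\) inequality for open sets under equidistribution already yields infinitely many admissible \(n\) (the boundary-measure-zero remark is not needed). I see no gap.
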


This lemma is a special case of \cite{Kammeyer:Approximating}*{Proposition~12} and arguing similarly as there we obtain the first half of our desired result.

\begin{proposition} \label{prop:upperboundonlowerrate}
    If \(\Lambda\) has roots on the unit circle, then \(\muinf(K) = \max\limits_{j=1,\ldots,u} \{\muinf_j\}\).
\end{proposition}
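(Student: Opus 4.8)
The plan is to pass to the dilatation class of \((\widehat{\sigma}_n)\) furnished by Proposition~\ref{prop:evaluationinreducedalex}, so that \(\muinf(K) = \liminf_n \frac{-\log \widehat{\sigma}_n}{\log n}\) with \(\widehat{\sigma}_n = \min\{|\Lambda(\zeta)| : \zeta^n = 1,\ \Lambda(\zeta) \neq 0\}\), and to estimate this from the factorization of \(\Lambda\). Writing \(\Lambda(z) = c \prod_i (z - \rho_i)^{m_i}\) over \(\C\), the factors from roots off the unit circle and from the leading coefficient are bounded above and below by positive constants on \(S^1\). Hence on \(n\)-th roots of unity \(|\Lambda(\zeta)|\) is, up to a uniform multiplicative constant, the product \(\prod_t |\zeta - \rho_t|^{\mu_t}\) over the finitely many roots \(\rho_t\) of \(\Lambda\) on the circle; since \(\Lambda\) has real coefficients and \(\Lambda(\pm 1) \neq 0\), these are exactly the conjugate pairs \(\xi_j, \overline{\xi}_j\), each of multiplicity \(\muinf_j\). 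Because the \(\rho_t\) are separated by a fixed minimal distance \(d_0 > 0\), for large \(n\) any \(\zeta\) lies within \(d_0/2\) of at most one of them, so \(|\Lambda(\zeta)|\) is comparable to \(|\zeta - \rho_{t_0}|^{\mu_{t_0}}\) for the nearest root \(\rho_{t_0}\) (and to a constant if there is none). Set \(m = \max_j \muinf_j\).

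For the lower bound \(\muinf(K) \ge m\) I would exhibit, for every large \(n\), a single good test point. Choosing a root \(\rho\) of maximal multiplicity \(m\) and letting \(\zeta\) be the nearest \(n\)-th root of unity different from \(\rho\), one has \(|\zeta - \rho| \le C/n\) since consecutive \(n\)-th roots of unity are \(2\sin(\pi/n)\) apart; for large \(n\) this \(\zeta\) is closer to \(\rho\) than to any other root, so \(\Lambda(\zeta) \neq 0\) and \(|\Lambda(\zeta)| \le C' n^{-m}\). Thus \(\widehat{\sigma}_n \le C' n^{-m}\) for all large \(n\), which forces \(\liminf_n \frac{-\log \widehat{\sigma}_n}{\log n} \ge m\).

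For the matching upper bound \(\muinf(K) \le m\) I would invoke Lemma~\ref{lemma:farawayfromzeros} with the \(z_t\) taken to be the circle roots \(\rho_t\). This yields a constant \(0 < R < \tfrac{1}{2}\) and infinitely many \(N\) such that every \(\rho_t\) is either an \(N\)-th root of unity or at distance \(\ge 2\sin(R\pi/N)\) from all of them. Fix such an \(N\) and any \(\zeta\) with \(\zeta^N = 1\) and \(\Lambda(\zeta) \neq 0\); since \(\zeta\) differs from each \(\rho_t\), the dichotomy gives \(|\zeta - \rho_t| \ge 2\sin(R\pi/N) \ge cR/N\) for every circle root. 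Fed into the local estimate, the unique possibly-nearby factor contributes at least \((cR/N)^{\mu_{t_0}} \ge (cR/N)^{m}\) (as \(\mu_{t_0} \le m\) and \(cR/N < 1\)), while the remaining factors stay bounded below by a constant. Hence \(\widehat{\sigma}_N \ge c'' N^{-m}\), so \(\frac{-\log \widehat{\sigma}_N}{\log N} \le m + o(1)\) along this subsequence, forcing the full liminf to be \(\le m\) and completing the proof.

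The main obstacle is the upper bound, and within it the separation input of Lemma~\ref{lemma:farawayfromzeros}: a priori an \(N\)-th root of unity could approach a zero of \(\Lambda\) faster than any fixed power of \(1/N\)---this is precisely what drives the larger rate \(\musup(K)\) through irrationality exponents---so without the lemma one has no control of \(\widehat{\sigma}_N\) from below along any usable subsequence. The second delicate point is combinatorial: one must use that each \(\zeta\) is close to at most one root, so the relevant exponent is the single multiplicity \(\mu_{t_0} \le m\) rather than the total degree \(\sum_t \mu_t\); the naive product bound over all circle roots would only yield the far weaker estimate \(\muinf(K) \le \sum_j 2\muinf_j\).
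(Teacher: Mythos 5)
Your proposal is correct and follows essentially the same route as the paper: the lower bound \(\muinf(K)\ge m\) via the nearest \(n\)-th root of unity to a root of maximal multiplicity, and the upper bound via Lemma~\ref{lemma:farawayfromzeros} combined with the observation that each root of unity can be \(\delta\)-close to at most one circle root of \(\Lambda\), so only the single multiplicity \(\mu_{t_0}\le m\) enters. The only cosmetic difference is that the paper works on the upper half circle using \(|\Lambda(z)|=|\Lambda(\overline z)|\) while you treat all circle roots symmetrically.
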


\begin{proof}
Without loss of generality we may assume \(\muinf_1 = \max_{j=1,\ldots,u} \{\muinf_j\}\).  The roots of \(\Lambda\) on \(S^1\) determine a constant \(0 < R < \frac{1}{2}\) as well as the sequence of positive numbers \((n_j)_j\) according to Lemma~\ref{lemma:farawayfromzeros}.  Choose \(0 < \delta < \frac{1}{2}\) so small that no point on the unit circle has more than one root of \(\Lambda\) in its \(\delta\)-ball.  Let \(\zeta_j\) be an \(n_j\)-th root of unity for which \(\widehat{\sigma}_{n_j} = |\Lambda(\zeta_j)|\).  Since \(|\Lambda(z)| = |\Lambda(\overline{z})|\), we can assume that \(\zeta_j\) lies on the upper half of the unit circle.  For large enough \(j\) precisely one root \(\xi_t = \xi_{t(j)}\) of \(\Lambda\) lies in the \(\delta\)-ball around \(\zeta_j\).  By Lemma~\ref{lemma:farawayfromzeros} we have \(|\xi_t - \zeta_j| \ge 2 \sin \left( \frac{R \pi}{n_j} \right)\) or even  \(|\xi_t - \zeta_j| \ge 2 \sin \left(\frac{\pi}{n_j}\right)\), which is the edge length of the regular \(n_j\)-gon, in case \(\xi_t\) happens to be an \(n_j\)-th root of unity.  Let \(d\) be the constant
  \begin{equation} \label{eq:lowerboundforrestofpolynomial} d = \inf_{0 \le t \le 1} \left|\frac{\Lambda(e^{\pi \ima t})}{\prod_{s=1}^u (e^{\pi \ima t}-\xi_s)^{\muinf_s}}\right| > 0. \end{equation}
  Then we obtain the estimate
  \begin{align*}
    \widehat{\sigma}_{n_j} = |\Lambda(\zeta_j)| & \ge d \left({\textstyle \prod\limits_{s \neq t}} \delta^{\muinf_s}\right) \left( 2 \sin \left(\frac{R \pi}{n_j} \right) \right)^{\muinf_t} \ge \\
    & \ge d \left({\textstyle \prod\limits_{s = 1}^u} \delta^{\muinf_s}\right) \left( 2 \sin \left(\frac{R \pi}{n_j} \right) \right)^{\muinf_1} \ge \frac{\textup{const}}{n_j^{\ \muinf_1}}
  \end{align*}
  where we used \(2 \sin(x) \ge x\) for small \(x > 0\).  Together with Proposition~\ref{prop:evaluationinreducedalex} this shows that \(\muinf(K) \le \muinf_1\).

  To prove the other inequality we consider the constant
  \begin{equation} \label{eq:upperboundforrestofpolynomial} D = \sup_{|z|=1} \left| \frac{\Lambda(z)}{(z-\xi_1)^{\muinf_1}} \right| > 0. \end{equation}
  There is an \(n\)-th root of unity \(\zeta_n\) closest but not equal to \(\xi_1\) and the distance \(|\zeta_n - \xi_1|\) is bounded from above by the edge length of the regular \(n\)-gon \(2 \sin \left(\frac{\pi}{n}\right)\).  Again we must have \(\Lambda(\zeta_n) \neq 0\) for big enough \(n\) so that
  \[ \widehat{\sigma}_n \le |\Lambda(\zeta_n)| \le D \left(2 \sin \left(\frac{\pi}{n}\right)\right)^{\muinf_1} \le \frac{\textup{const}}{n^{\muinf_1}}, \]
  where we used \(\sin(x) \le x\) for small \(x > 0\).  It follows that \(\muinf(K) \ge \muinf_1\).
\end{proof}

In the above proof we saw that regular \(n\)-gons with vertices far away from the roots of \(\Lambda\) were responsible for the precise value of the lower shrinkage rate.  Correspondingly, regular \(n\)-gons with vertices close to some given root or roots give rise to the upper shrinkage rate as we will see next.  Such \(n\)-gons occur whenever one of the numbers \(t_1, \ldots, t_u\) has an exceptionally good rational approximation.

\begin{proposition}
If \(\Lambda\) has roots on the unit circle, then \(\musup(K) = \max\limits_{j=1,\ldots,u} \{ \musup_j \}\).
\end{proposition}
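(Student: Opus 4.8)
The plan is to run an argument parallel to the proof of Proposition~\ref{prop:upperboundonlowerrate}, replacing the role of Lemma~\ref{lemma:farawayfromzeros} by the two faces of the irrationality exponent. By Proposition~\ref{prop:evaluationinreducedalex} I may work with $\widehat{\sigma}_n$ in place of $\sigma_n$, so that $\musup(K) = \limsup_n \frac{-\log\widehat{\sigma}_n}{\log n}$. Since $|\Lambda(z)| = |\Lambda(\overline{z})|$ I restrict attention to the upper half circle, where \eqref{eq:lowerboundforrestofpolynomial} and \eqref{eq:upperboundforrestofpolynomial} provide constants $0 < d \le D_j$ with $d\,\prod_{s=1}^u |z - \xi_s|^{\muinf_s} \le |\Lambda(z)| \le D_j\,|z-\xi_j|^{\muinf_j}$ for $|z|=1$. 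The key translation is that an $n$-th root of unity $e^{2\pi\ima l/n}$ is close to $\xi_s = e^{\pi\ima t_s}$ precisely when $l/n$ approximates $t_s/2$, and the identity $|e^{2\pi\ima l/n} - \xi_s| = 2|\sin(\pi(l/n - t_s/2))|$ shows that this Euclidean distance is comparable to $|l/n - t_s/2|$. Because the irrationality exponent is unchanged under multiplication by a nonzero rational, $\nu(t_s/2) = \nu_s$.

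For the upper bound $\musup(K) \le \max_j \musup_j$ I would use the finiteness half of the irrationality exponent. Fix $\nu'_s > \nu_s$ for each $s$; by definition only finitely many fractions $l/n$ satisfy $0 < |t_s/2 - l/n| \le n^{-\nu'_s}$, so for all large $n$ every $n$-th root of unity distinct from $\xi_s$ lies at distance $\gtrsim n^{-\nu'_s}$ from $\xi_s$. Given $n$, let $\zeta_n$ realize $\widehat{\sigma}_n$; for large $n$ it sits in a small ball around a unique root $\xi_{s(n)}$, so the remaining factors in $d\prod_s|\zeta_n-\xi_s|^{\muinf_s}$ are bounded below by the minimal separation of the roots. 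This yields $\widehat{\sigma}_n \gtrsim n^{-\nu'_{s(n)}\muinf_{s(n)}} \ge n^{-\max_s \nu'_s \muinf_s}$, hence $\musup(K) \le \max_s \nu'_s \muinf_s$, and letting each $\nu'_s \downarrow \nu_s$ gives the claim. The finiteness of every $\nu_s$ guaranteed by Theorem~\ref{thm:excursionsummary} is exactly what keeps this bound from degenerating.

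For the reverse inequality $\musup(K) \ge \max_j \musup_j$ I would use the existence half. Choose $j_0$ with $\musup_{j_0} = \nu_{j_0}\muinf_{j_0}$ maximal and fix $\nu < \nu_{j_0}$; then infinitely many fractions $l/n$ satisfy $0 < |t_{j_0}/2 - l/n| \le n^{-\nu}$. Along the corresponding sequence of moduli, the root of unity $\zeta_n = e^{2\pi\ima l/n}$ satisfies $0 < |\zeta_n - \xi_{j_0}| \lesssim n^{-\nu}$; in particular $\zeta_n$ is distinct from all roots for large $n$, so $\Lambda(\zeta_n)\neq 0$ and $\widehat{\sigma}_n \le |\Lambda(\zeta_n)| \le D_{j_0} |\zeta_n - \xi_{j_0}|^{\muinf_{j_0}} \lesssim n^{-\nu\muinf_{j_0}}$. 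Thus $\musup(K) \ge \nu\muinf_{j_0}$, and letting $\nu \uparrow \nu_{j_0}$ finishes the proof. The main obstacle I anticipate is bookkeeping rather than conceptual: one must match each root's contribution to its own product $\nu_s\muinf_s$, so that the answer is the maximum of the products and not the product of the maxima, and one must check uniformly over the finitely many roots that the minimizer $\zeta_n$ is eventually trapped near a single root and stays away from the conjugate roots, which are silently absorbed into the constant $d$ of \eqref{eq:lowerboundforrestofpolynomial}.
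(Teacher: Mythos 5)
Your proposal is correct and follows essentially the same route as the paper's proof: reduce to \(\widehat{\sigma}_n\), translate distances from roots of unity to the \(\xi_j\) into rational approximation of the \(t_j\), and use the two halves of the definition of the irrationality exponent together with the constants \(d\) and \(D\) to get matching upper and lower bounds. The only cosmetic difference is that you approximate \(t_j/2\) by fractions \(l/n\) (using invariance of the irrationality exponent under multiplication by a nonzero rational), whereas the paper works with \(2n\)-th roots of unity \(e^{\pi\ima m/n}\) and approximates \(t_j\) directly.
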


\begin{proof}
  Without loss of generality we may assume \(\musup_1 = \nu_1 \muinf_1 = \max\limits_{j=1,\ldots,u} \{ \musup_j\}\).  Let \(\varepsilon > 0\).  By the definition of the irrationality exponent there exists a sequence of pairs of positive integers \((m_j, n_j)\) with \((n_j)_j\) monotonically increasing such that \(0 < \left|t_1 - \frac{m_j}{n_j}\right| < \frac{1}{n_j^{\nu_1 -\varepsilon}}\).  Let \(\zeta_j = e^{\frac{\pi \ima m_j}{n_j}}\) be the corresponding \(2 n_j\)-th root of unity.  For \(|z| \le 1\) we have \(|e^z - 1| \le \frac{7}{4}|z|\) from which we conclude
  \[ |\zeta_j - \xi_1| \le \left|e^{\frac{\pi \ima m_j}{n_j}} - e^{\pi \ima t_1} \right| = \left|1 - e^{\pi \ima\left(t_1 - \frac{m_j}{n_j}\right)}\right| \le \frac{7 \pi}{4} \left|t_1 - \frac{m_j}{n_j}\right| \le \frac{7 \pi}{4 n_j^{\nu_1 -\varepsilon}}. \]
  Let \(D\) be the constant from \eqref{eq:upperboundforrestofpolynomial} above.  For large enough \(j\) we must have \(\Lambda(\zeta_j) \neq 0\) hence
  \[ \widehat{\sigma}_{2 n_j} \le |\Lambda(\zeta_j)| \le D |\zeta_j - \xi_1|^{\muinf_1} \le \frac{\textup{const}}{n_j^{(\nu_1 - \varepsilon) \muinf_1}} = \frac{\textup{const}'}{(2 n_j)^{(\nu_1 - \varepsilon) \muinf_1}}. \]
  It follows that \(\musup(K) \ge (\nu_1 -\varepsilon) \muinf_1\) for all \(\varepsilon > 0\), thus \(\musup(K) \ge \musup_1\).

  On the other hand there is some constant \(N \gg 0\) such that for all \(\frac{m}{n} \in \Q\) with \(n \ge N\) and all \(j = 1, \ldots, u\) we have either \(t_j = \frac{m}{n}\) or \(\left|t_j - \frac{m}{n}\right| \ge \frac{1}{n^{\nu_j + \varepsilon}}\).  From \(|1 - e^z| \ge \frac{|z|}{4}\) for \(|z| \le 1\) it follows that for all \(2n\)-th roots of unity \(\zeta_n = e^{\frac{\pi \ima m}{n}}\) and all \(j = 1, \ldots, u\) either \(\zeta_n = \xi_j\) or
  \[ |\zeta_n - \xi_j| \ge \frac{\pi}{4} \left|t_j - \frac{m}{n}\right| \ge \frac{\textup{const}}{n^{\nu_j + \varepsilon}}. \]
Let \(d\) and \(\delta\) be as in the proof of Proposition~\ref{prop:upperboundonlowerrate} above.  For \(n \ge N\) let \(\zeta_n\) be an \(n\)-th root of unity on the upper half of the unit circle for which \(\widehat{\sigma}_n = |\Lambda(\zeta_n)|\).  Again let \(\xi_t = \xi_{t(n)}\) be the only root of \(\Lambda\) within the \(\delta\)-ball around \(\zeta_n\), assuming \(n\) is sufficiently large.  Since an \(n\)-th root of unity is also a \(2n\)-th root of unity,  we can estimate with the above that
  \[ \widehat{\sigma}_n = |\Lambda(\zeta_n)| \ge d \left({\textstyle \prod\limits_{s \neq t}} \delta^{\muinf_s}\right) |\zeta_n - \xi_t|^{\muinf_t} \ge \frac{\textup{const}}{n^{(\nu_t + {\varepsilon})\muinf_t}} \ge \frac{\textup{const}}{n^{\musup_1 + \varepsilon \max_s \{ \muinf_s \}}}. \]
  It follows that \(\musup(K) \le \musup_1 + \varepsilon \max_s \{ \muinf_s \}\) for all \(\varepsilon > 0\), thus \(\musup(K) \le \musup_1\).
\end{proof}

\begin{proof}[Proof of Theorem~\ref{thm:maintheorem}]
  By the above two propositions only the case when \(\Lambda\) has no roots on the unit circle still needs attention.  Assuming this, the constant \(\inf_{|z|=1} |\Lambda(z)|\) is a positive lower bound for the sequence \((\widehat{\sigma}_n)_{n>1}\).  For every CW structure of \(X\) the sequence \((\sigma_n)\) is dilatationally equivalent to \((\widehat{\sigma}_n)\) by Proposition~\ref{prop:evaluationinreducedalex} so \((\sigma_n)\) is likewise positively bounded from below.  This means in particular that \((\sigma_n)\) has vanishing shrinkage rate so \(\mu(K) = 0\).
\end{proof}

\section{Shrinkage types of the zoo of knots}
\label{sec:zoo}

In this section we investigate the taxonomy shrinkage types and shrinkage rates impose on the zoo of knots.  By our main result, Theorem~\ref{thm:maintheorem}, the type~I knots are the knots for which a fixed spectral gap remains for all finite cyclic coverings.  The type~II knots are the ones where the spectral gap closes with an eventually constant rate whereas for the type~III knots the gap closes with an oscillating rate.  Theorem~\ref{thm:twistandtorusknots} from the introduction says that all three cases occur within the two classical infinite families of prime knots as we will now proof.

\begin{proof}[Proof of Theorem~\ref{thm:twistandtorusknots}.]
  The normalized reduced Alexander polynomial of the twist knot with \(2n\) half-twists is \(\Lambda(z) = nz^2 -(2n+1)z + n\).  This includes the case \(n=0\) of the unknot when \(\Lambda\) is a unit in \(\Z[z^{\pm 1}]\).  So the unknot has shrinkage type~I.  For \(n > 0\) the polynomial is quadratic with discriminant \(4n+1 > 0\).  Thus it has two distinct real roots which means the knot has shrinkage type~I as well.

  The \((p,q)\)-torus knot \(T_{p,q}\) with \(p\) and \(q\) coprime has normalized reduced Alexander polynomial
  \[ \Lambda(z) = \frac{(z^{pq}-1)(z-1)}{(z^p-1)(z^q-1)} \]
  or, in other words, \(\Lambda(z) = \prod_\xi (z-\xi)\) where \(\xi\) runs over all \(pq\)-th roots of unity which are neither \(p\)-th nor \(q\)-th roots of unity.  Since each \(\xi\) has multiplicity and total multiplicity one, we have \(\muinf(T_{p,q}) = \musup(T_{p,q}) = 1\) whenever \(p,q \neq \pm 1\).   Thus every nontrival \(T_{p,q}\) has shrinkage type~II .

  The reduced normalized Alexander polynomial of the twist knot \(K\) with \(2n-1\) half-twists is given by \(\Lambda(z) = nz^2 -(2n-1)z +n\).  For \(n \ge 2\) this polynomial is irreducible over \(\Z\) but not monic.  Thus it is not cyclotomic and its zeros are never roots of unity.  Nevertheless, the root
  \[ \xi = \frac{1}{2n}\left(2n-1 + \ima \sqrt{4n-1}\right) \]
  of \(\Lambda(z)\) lies on the unit circle.  So while the multiplicity of \(\xi\) is one, Theorem~\ref{thm:excursionsummary} says that the total multiplicity is at least two.  It follows that \(\muinf(K) = 1\) while \(\musup(K) \ge 2\), thus \(K\) has shrinkage type~III.
\end{proof}

Note that the proof actually shows torus knots are of type~\(\textup{II}_1\) while odd twist knots are of type \(\textup{III}_1\).  As discussed in Section~\ref{sec:excursion}, we cannot say much about upper shrinkage rates of type~\(\textup{III}\) knots except for the bounds coming from linear forms in logarithms.  For example the three-twist knot \(5_2\) has \(\Lambda(z) = 2z^2 -3z +2\) and for this polynomial we computed a Baker constant of \num{452130}.  So the three-twist knot is of type \(\textup{III}_1^{\le \num{452131}}\).  Next we prove Theorem~\ref{thm:knotsofalltypesexist} which asserts that knots of type~\(\textup{II}_n\) and \(\textup{III}_n\) exist for any \(n\).

\begin{proof}[Proof of Theorem~\ref{thm:knotsofalltypesexist}.]
  Let  \(p(z) = z^2-z+1\).  By Theorem~\ref{thm:levine} we can realize the polynomials \(\Lambda_1(z) = (p(z))^n\) and \(\Lambda_2(z) = 1\) as reduced Alexander polynomials of a knot~\(K\).  By Theorem~\ref{thm:maintheorem} the knot~\(K\) is of type~\(\textup{II}_n\).  Let \(K = K_1 \# \cdots \# K_r\) be the prime decomposition of~\(K\).  Accordingly, we have an isomorphism of \(\Q[z^{\pm 1}]\)-modules
  \[ H_1(X_\infty^K;\Q) \cong H_1(X_\infty^{K_1};\Q) \oplus \cdots \oplus H_1(X_\infty^{K_r};\Q) \]
for the first rational homology of the infinite cyclic coverings.  By construction \(H_1(X_\infty^K; \Q) \cong \Q[z^{\pm 1}] / (p^n(z))\) is cyclic.  Moreover, \((p^n(z))\) is a primary ideal in the principal ideal domain \(\Q[z^{\pm 1}]\) because \(p(z)\) is irreducible.  Thus all but one of the summands \(H_1(X_\infty^{K_i})\) is trivial, hence all but one knot \(K_{i_0}\) have trivial reduced (and nonreduced) Alexander polynomials.  It follows that \(K_{i_0}\) is a prime knot with \(\Lambda_1(z) = (p(z))^n\) and thus has type~\(\textup{II}_n\).  Similarly we obtain a type~\(\textup{III}_n\) prime knot using \(q(z) = 2z^2-3z+2\) instead of \(p(z)\).
\end{proof}

We list the shrinkage types of prime knots with up to eight crossings in Table~\ref{tab:knottypes}.  The knots are enumerated by Alexander--Briggs notation as continued by Rolfsen~\cite{Rolfsen:KnotsAndLinks} and the normalized reduced Alexander polynomials are included as a product of irreducible factors.  To determine the shrinkage type of the knots we had to find the roots of these polynomials on the unit circle.  Since \(\Lambda\) is of even degree and has palindromic coefficients, there is an elementary way to do this which is nicely explained in~\cite{Conrad:NumbersOnCircle}*{Theorem~2.3}.  The three non-alternating knots in the list are of type~II.  To prevent the reader from uttering any astronomic conjectures from this small set of data we should say that \(9_{42}\) and \(9_{44}\) are non-alternating knots of type~\(\textup{III}_1\) and I, respectively.  Amongst the prime knots up to ten crossings there is no knot of which we can say for sure it is of type~\(\textup{III}_2\); there are only the three possible candidates \(10_{65}\), \(10_{77}\) and \(10_{82}\) mentioned in the introduction.  However, as exploited in the proof of Theorem~\ref{thm:knotsofalltypesexist}, any knot with reduced Alexander polynomial \(\Lambda(z) = (2z^2-3z+2)^2\) is definitely of type~\(\textup{III}_2\).  With the help of ``KnotInfo''~\cite{Cha-Livingston:KnotInfo} we checked that among the 2977 prime knots up to twelve crossings there are precisely five with this reduced Alexander polynomial.  All of them have twelve crossings and only one of them is alternating: the knot \(12\textup{a}_{169}\) as pictured in Figure~\ref{fig:12a169}.
\begin{figure}[htb]
  \includegraphics[width=3.8cm]{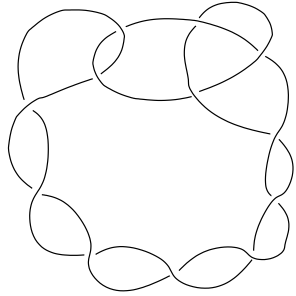}
  \caption{The knot \(12\textup{a}_{169}\), a prime knot of type \(\textup{III}_2\).}
  \label{fig:12a169}
\end{figure}

In the introduction we explained that Novikov--Shubin numbers originally motivated our study of shrinkage rates.  So we should briefly confirm that the lower shrinkage rate equals the reciprocal of the second Novikov--Shubin number \(\alpha^{(2)}_2(X_\infty)\) of the infinite cyclic covering \(X_\infty\).  For the definition of Novikov--Shubin numbers, consult~\cite{Lueck:L2Invariants}*{Chapter~2}.

\afterpage{
  \setlength{\tabcolsep}{6pt}
  \renewcommand{\arraystretch}{1.5}
  \begin{longtable}{|clp{4cm}cp{2.6cm}|}
    \hline
    \endfoot
    \caption{Shrinkage types of prime knots up to eight crossings.}
    \label{tab:knottypes} \\
    \hline
    \textbf{knot} & \textbf{name} & \(\bm{\Lambda(z)}\) & \textbf{type} & \textbf{remark} \\
    \hline
    \endfirsthead
    \hline
    \textbf{knot} & \textbf{name} & \(\bm{\Lambda(z)}\) & \textbf{type} & \textbf{remark} \\
    \hline
    \endhead
  \(3_1\) & trefoil & \(z^2-z+1\) & \(\textup{II}_1\) & torus knot \\
  \(4_1\) & figure eight & \(z^2-3z+1\) & I & twist knot\\
  \(5_1\) & cinquefoil & \(z^4-z^3+z^2-z+1\) & \(\textup{II}_1\) & torus knot \\
  \(5_2\) & three-twist & \(2z^2-3z+2\) & \(\textup{III}_1\) & twist knot \\
  \(6_1\) & Stevedore & \((2z-1)(z-2)\) & I & twist knot \\
  \(6_2\) & Miller inst. & \(z^4-3z^3+3z^2-3z^2+1\) & \(\textup{III}_1\) & \\
  \(6_3\) & & \(z^4-3z^3+5z^2-3z+1\) & I & \\
  \(7_1\) & septoil & \(z^6-z^5+z^4-z^3+z^2-z+1\) & \(\textup{II}_1\) & torus knot \\
  \(7_2\) & five-twist & \(3z^2-5z+3\) & \(\textup{III}_1\) & twist knot \\
  \(7_3\) & & \(2z^4-3z^3+3z^2-3z+2\) & \(\textup{III}_1\) & \\
  \(7_4\) & endless & \(4z^2-7z+4\) & \(\textup{III}_1\) & \\
  \(7_5\) & & \(2z^4-4z^3+5z^2-4z+2\) & \(\textup{III}_1\) & \\
  \(7_6\) & & \(z^4-5z^3+7z^2-5z+1\) & I & \\
  \(7_7\) & & \(z^4-5z^3+9z^2-5z+1\) & I & \\
  \(8_1\) & six-twist & \(3z^2-7z+3\) & I & twist knot \\
  \(8_2\) & & \(z^6-3z^5+3z^4-3z^3+\) \newline \(+3z^2-3z+1\) & \(\textup{III}_1\) & \\
  \(8_3\) & & \(4z^2-9z+4\) & I & \\
  \(8_4\) & & \(2z^4-5z^3+5z^2-5z+2\) & \(\textup{III}_1\) & \\
  \(8_5\) & & \((z^2-z+1)\cdot\) \newline \(\cdot (z^4-2z^3+z^2-2z+1)\) & \(\textup{III}_1\) & \(\Lambda\) has two kinds of zeros on \(S^1\) \\
  \(8_6\) & & \(2z^4-6z^3+7z^2-6z+2\) & \(\textup{III}_1\) & \\
  \(8_7\) & & \(z^6-3z^5+5z^4-5z^3+\) \newline \(+5z^2-3z+1\) & \(\textup{III}_1\) & \\
  \(8_8\) & & \(2z^4-6z^3+9z^2-6z+2\) & I & \\
  \(8_9\) & & \(z^6-3z^5+5z^4-7z^3+\) \newline \(+5z^2-3z+1\) & I & \\
  \(8_{10}\) & & \((z^2-z+1)^3\) & \(\textup{II}_3\) & first prime knot with \(\muinf(K) \neq 1\) \\ 
  \(8_{11}\) & & \((2z-1)(z-2)(z^2-z+1)\) & \(\textup{II}_1\) & non-torus knot of type \(\textup{II}_1\) \\
  \(8_{12}\) & & \(z^4-7z^3+13z^2-7z+1\) & I & \\
  \(8_{13}\) & & \(2z^4-7z^3+11z^2-7z+2\) & I & \\
  \(8_{14}\) & & \(2z^4-8z^3+11z^2-8z+2\) & \(\textup{III}_1\) & \\
  \(8_{15}\) & & \((z^2-z+1)(3z^2-5z+3)\) & \(\textup{III}_1\) & \(\Lambda\) has two kinds of zeros on \(S^1\) \\
  \(8_{16}\) & & \(z^6-4z^5+8z^4-9z^3+\) \newline \(+8z^2-4z+1\) & \(\textup{III}_1\) & \\
  \(8_{17}\) & & \(z^6-4z^5+8z^4-11z^3+\) \newline \(+8z^2-4z+1\) & I & \\
  \(8_{18}\) & carrick mat & \((z^2-z+1)(z^2-3z+1)\) & \(\textup{II}_1\) & \(\Lambda(z) \neq \Delta(z)\) \\
  \(8_{19}\) & & \((z^2-z+1)(z^4-z^2+1)\) & \(\textup{II}_1\) & non-alternating torus knot \\
  \(8_{20}\) & & \((z^2-z+1)^2\) & \(\textup{II}_2\) & non-alternating \\
  \(8_{21}\) & & \((z^2-z+1)(z^2-3z+1)\) & \(\textup{II}_1\) & non-alternating \\
  \end{longtable}
}

\begin{theorem} \label{thm:relationtons}
A knot \(K\) has shrinkage type~\textup{I} if and only if \(\alpha^{(2)}_2(X_\infty) = \infty^+\).  For all knots of type~\textup{II} and~\textup{III} we have \(\alpha^{(2)}_2(X_\infty) = \muinf(K)^{-1}\).
\end{theorem}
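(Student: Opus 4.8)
The plan is to compute the relevant spectral density function directly by Fourier analysis, exploiting that the deck group of \(X_\infty \to X\) is \(\Z\). Under the Fourier transform the group von Neumann algebra \(\mathcal{N}(\Z)\) is identified with \(L^\infty(S^1)\), equipped with the trace given by integration against normalized Haar measure \(\mu\), and \((\ell^2 \Z)^r\) becomes \(L^2(S^1)^r\). Retaining the notation of Proposition~\ref{prop:dependencyonreducedalex}, the \(\ell^2\)-completed differential \(d_2\) is thereby turned into multiplication by the matrix \(A(z)\) on the circle, so that \(d_2^* d_2\) becomes multiplication by the positive semidefinite matrix-valued function \(A(z)^* A(z)\). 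Writing \(E_{\lambda^2}(d_2^* d_2)\) for the spectral projection onto \([0, \lambda^2]\), the relevant spectral density function is \(F(\lambda) = \dim_{\mathcal{N}(\Z)} \im E_{\lambda^2}(d_2^* d_2)\), and by definition
\[ \alpha^{(2)}_2(X_\infty) = \liminf_{\lambda \to 0^+} \frac{\log\left(F(\lambda) - F(0)\right)}{\log \lambda}, \]
with the convention that this value is \(\infty^+\) exactly when \(d_2^* d_2\) has a spectral gap at zero, i.e. when \(F(\lambda) = F(0)\) for all small \(\lambda > 0\).

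First I would pass to the Smith normal form \(SAT\) of \(A\) over \(\Q[z^{\pm 1}]\), whose diagonal entries are the normalized reduced Alexander polynomials \(\Lambda_1, \ldots, \Lambda_k\). Exactly as in the proof of Proposition~\ref{prop:dependencyonreducedalex}, the matrices \(S\) and \(T\) induce bounded invertible operators on \((\ell^2 \Z)^r\) and \((\ell^2 \Z)^s\). Since Novikov--Shubin numbers are unchanged when a morphism of Hilbert \(\mathcal{N}(\Z)\)-modules is pre- and post-composed with isomorphisms \cite{Lueck:L2Invariants}*{Chapter~2}, I may replace \(d_2\) by multiplication with the diagonal matrix carrying the entries \(\Lambda_1(z), \ldots, \Lambda_k(z)\) and padded by zero rows and columns. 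The operator \(d_2^* d_2\) then acts with the pointwise eigenvalue functions \(|\Lambda_1(z)|^2, \ldots, |\Lambda_k(z)|^2\) together with the constantly vanishing ones coming from the padding, and the spectral theorem for multiplication operators yields
\[ F(\lambda) - F(0) = \sum_{i=1}^k \mu\left\{ z \in S^1 : 0 < |\Lambda_i(z)| \le \lambda \right\}. \]
The constantly vanishing eigenvalue functions and the finitely many genuine zeros of the \(\Lambda_i\) on \(S^1\) all contribute to \(F(0)\) and drop out of the difference.

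It remains to extract the order of vanishing of \(F(\lambda) - F(0)\) as \(\lambda \to 0^+\). If \(\Lambda\) has no root on the unit circle, then neither does any divisor \(\Lambda_i\), so each \(|\Lambda_i(z)|\) is bounded below by a positive constant on \(S^1\); hence \(F(\lambda) = F(0)\) for all small \(\lambda\), there is a spectral gap, and \(\alpha^{(2)}_2(X_\infty) = \infty^+\). By Theorem~\ref{thm:maintheorem} this is precisely the type~I situation, which proves the first assertion. Otherwise, near a root \(\xi\) of \(\Lambda_i\) of multiplicity \(m\) one has \(|\Lambda_i(z)| \asymp |z - \xi|^m\), so the arc \(\{\, 0 < |\Lambda_i(z)| \le \lambda \,\}\) surrounding \(\xi\) has measure of order \(\lambda^{1/m}\). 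Summing these local contributions gives \(F(\lambda) - F(0) \asymp \sum_{i,j} c_{ij}\, \lambda^{1/m_{ij}}\), where \(m_{ij}\) is the multiplicity of the root \(\xi_j\) in \(\Lambda_i\). As \(\lambda \to 0^+\) the sum is asymptotic to its slowest-decaying summand, i.e. the one with the smallest exponent \(1/m_{ij}\), hence the largest multiplicity \(m_{ij}\). Since every \(\Lambda_i\) divides \(\Lambda_1 = \Lambda\), the multiplicity of any root in \(\Lambda_i\) is bounded by its multiplicity in \(\Lambda\); hence \(\max_{i,j} m_{ij} = \max_j \muinf_j\), attained by the factor \(\Lambda\) itself. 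Therefore \(F(\lambda) - F(0) \asymp \textup{const} \cdot \lambda^{1/\max_j \muinf_j}\), and together with Theorem~\ref{thm:maintheorem} this gives
\[ \alpha^{(2)}_2(X_\infty) = \frac{1}{\max_j \muinf_j} = \muinf(K)^{-1}. \]

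The step I expect to be most delicate is the passage to the Smith normal form at the von Neumann level: one must check that composing with the bounded invertible operators induced by \(S\) and \(T\) leaves the limit quotient of logarithms defining \(\alpha^{(2)}_2\) unaffected. This is the \(\ell^2\)-analogue of the dilatation estimate used in Proposition~\ref{prop:dependencyonreducedalex}, and it is exactly the content of the invariance of Novikov--Shubin numbers under isomorphisms; the remaining asymptotic analysis of \(F(\lambda) - F(0)\) is routine once the local model \(|\Lambda_i(z)| \asymp |z - \xi|^m\) is in hand.
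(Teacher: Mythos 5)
Your proposal is correct and takes essentially the same route as the paper: pass to the Smith normal form using the invariance of Novikov--Shubin numbers under composition with the isomorphisms induced by \(S\) and \(T\), compute the invariant of each diagonal entry \(\Lambda_i\) as the reciprocal of the highest multiplicity of its roots on \(S^1\) (with \(\infty^+\) if there are none), and use the divisibility \(\Lambda_i \mid \Lambda\) to see that the extremal multiplicity is attained by \(\Lambda_1 = \Lambda\). The only difference is that you inline the sublevel-set estimate for the spectral density function of a polynomial multiplication operator, which the paper obtains by citing \cite{Lueck:L2Invariants}*{Lemma~2.58} and its proof.
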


\begin{proof}
  Compare \cite{Kammeyer:Approximating}*{p.\,11}.  We obtain the \(\ell^2\)-chain complex \(C^{(2)}_*(X_\infty)\) of \(X_\infty\) by applying the functor \(\ell^2 \Z \otimes_{\Z[\Z]}\) to the cellular chain complex of \(X_\infty\).  The second \(\ell^2\)-differential
  \[ C^{(2)}_2(X_\infty) \overset{d^{(2)}_2}{\longrightarrow} C^{(2)}_1(X_\infty) \]
  is still of the form \((\ell^2 \Z)^{n-1} \stackrel{\cdot A}{\longrightarrow} (\ell^2 \Z)^n\) so that it is given by right multiplication with \(A \in M(r,s;\Z[z^{\pm 1}])\).  Again we transform \(A\) into Smith normal form \(SAT\).  From \cite{Lueck:L2Invariants}*{Lemma~2.11\,(9), p.\,77; Lemma~2.15\,(1), p.\,80} we obtain
  \[ \alpha^{(2)}_2(X_\infty) = \alpha^{(2)}(A) = \alpha^{(2)}(SAT) = \min_{i=1, \ldots, k} \{ \alpha^{(2)}(\Lambda_i) \}. \]
  According to \cite{Lueck:L2Invariants}*{Lemma\,2.58, p.\,100} and its proof, the Novikov--Shubin number of a polynomial multiplication operator \(\ell^2 \Z \stackrel{\cdot p(z)}{\longrightarrow} \ell^2 \Z\) is given by \(\frac{1}{\nu}\) where \(\nu\) is the highest multiplicity of a root of \(p(z)\) on the unit circle.  If \(p(z)\) has no such root, the Novikov--Shubin number \(\alpha^{(2)}(p(z))\) is \(\infty^+\).  The relation \(\Lambda_{i+1} \mid \Lambda_i\) implies that multiplicities of roots of \(\Lambda_i\) are growing for decreasing \(i\).  Thus \(\min_i \{ \alpha^{(2)}(\Lambda_i) \} = \alpha^{(2)}(\Lambda_1)\) which completes the proof.
\end{proof}

We want to conclude this article by discussing some open questions.  Of course all questions involving upper shrinkage rates are directly related to calculating irrationality exponents of ratios of two logarithms and therefore answers do not seem in reach.  Nevertheless, let us ask: 

\begin{itemize}
\item \emph{Is the upper shrinkage rate always an integer?}
\item \emph{Are there examples of knots with some \(\nu_j > 2\)?}
\item \emph{What are the asymptotic percentages of shrinkage types for growing crossing number?}
\end{itemize}

On the conceptual side it would of course be desirable that shrinkage types had some additional geometric relevance beside describing the spectral gaps for cyclic coverings.

\begin{itemize}
\item \emph{Are there relations between shrinkage rates and other geometric concepts of knot theory?}
\end{itemize}

As negative answers we should mention that there are fibered and non-fibered as well as hyperbolic and non-hyperbolic knots of type~I, II and~III.  Note however, that a slice knot with trivial second Alexander polynomial has even lower shrinkage rate as a consequence of the Fox--Milnor condition.

\begin{bibdiv}[References]
  \begin{biblist}

\bib{Baker-Wuestholz:LogarithmicForms}{article}{
   author={Baker, A.},
   author={W{\"u}stholz, G.},
   title={Logarithmic forms and group varieties},
   journal={J. Reine Angew. Math.},
   volume={442},
   date={1993},
   pages={19--62},
   issn={0075-4102},
   review={\MRref{1234835}{}},
}

\bib{Becher-et-al:IrrationalityExponents}{article}{
   author={Becher, V.},
   author={Bugeaud, Y.},
   author={Slaman, T.},
   title={The irrationality exponents of computable numbers},
   journal={Proc. Amer. Math. Soc.},
   volume={144},
   date={2016},
   number={4},
   pages={1509--1521},
   issn={0002-9939},
   review={\MRref{3451228}{}},
}

\bib{Bugeaud:Cantor}{article}{
   author={Bugeaud, Y.},
   title={Diophantine approximation and Cantor sets},
   journal={Math. Ann.},
   volume={341},
   date={2008},
   number={3},
   pages={677--684},
   issn={0025-5831},
   review={\MRref{2399165}{}},
}
    
\bib{Bugeaud:Effective}{article}{
   author={Bugeaud, Y.},
   title={Effective irrationality measures for quotients of logarithms of rational numbers},
   journal={Hardy-Ramanujan J.},
   volume={38},
   date={2015},
   pages={45--48},
   note={\url{https://hal.archives-ouvertes.fr/hal-01253659}},
}

\bib{Cha-Livingston:KnotInfo}{webpage}{
  author={Cha, J.\,C.},
  author={Livingston, C.},
  title={KnotInfo},
  note={Table of Knot Invariants},
  myurl={http://www.indiana.edu/~knotinfo},
  date={accessed June 1, 2016},
}

\bib{Conrad:NumbersOnCircle}{article}{
  author={Conrad, K.},
  title={Roots on a circle},
  journal={expository note available at \url{http://www.math.uconn.edu/~kconrad/blurbs/}. (accessed June 1, 2016)},
}

\bib{Feldman:Improvement}{article}{
   author={Feldman, N. I.},
   title={An improvement of the estimate of a linear form in the logarithms
   of algebraic numbers},
   language={Russian},
   journal={Mat. Sb. (N.S.)},
   volume={77 (119)},
   date={1968},
   pages={423--436},
   review={\MRref{0232736}{}},
}

\bib{Friedl:Realization}{article}{
   author={Friedl, S.},
   title={Realizations of Seifert matrices by hyperbolic knots},
   journal={J. Knot Theory Ramifications},
   volume={18},
   date={2009},
   number={11},
   pages={1471--1474},
   issn={0218-2165},
   review={\MRref{2589227}{}},
}

\bib{Gordon:SomeAspects}{article}{
   author={Gordon, C. McA.},
   title={Some aspects of classical knot theory},
   conference={
      title={Knot theory},
      address={Proc. Sem., Plans-sur-Bex},
      date={1977},
   },
   book={
      series={Lecture Notes in Math.},
      volume={685},
      publisher={Springer, Berlin},
   },
   date={1978},
   pages={1--60},
   review={\MRref{521730}{}},
}

\bib{Gouillon:ExplicitLowerBounds}{article}{
   author={Gouillon, N.},
   title={Explicit lower bounds for linear forms in two logarithms},
   language={English, with English and French summaries},
   journal={J. Th\'eor. Nombres Bordeaux},
   volume={18},
   date={2006},
   number={1},
   pages={125--146},
   issn={1246-7405},
   review={\MRref{2245879}{}},
}

\bib{Hogben:Handbook}{collection}{
   title={Handbook of linear algebra},
   series={Discrete Mathematics and its Applications (Boca Raton)},
   editor={Hogben, L.},
   edition={2},
   publisher={CRC Press, Boca Raton, FL},
   date={2014},
   pages={xxx+1874},
   isbn={978-1-4665-0728-9},
   review={\MRref{3013937}{}},
}

\bib{Kammeyer:Approximating}{article}{
  author={Kammeyer, H.},
  title={Approximating Novikov--Shubin numbers of virtually cyclic coverings},
  date={2015},
  status={eprint},
  note={\arXiv{1509.01142}},
}

\bib{Levine:Characterization}{article}{
   author={Levine, J.},
   title={A characterization of knot polynomials},
   journal={Topology},
   volume={4},
   date={1965},
   pages={135--141},
   issn={0040-9383},
   review={\MRref{0180964}{}},
}

\bib{Lueck:Approximating}{article}{
   author={L{\"u}ck, W.},
   title={Approximating $L^2$-invariants by their finite-dimensional
   analogues},
   journal={Geom. Funct. Anal.},
   volume={4},
   date={1994},
   number={4},
   pages={455--481},
   issn={1016-443X},
   review={\MRref{1280122}{}},
}

\bib{Lueck:L2Invariants}{book}{
   author={L{\"u}ck, W.},
   title={$L^2$-invariants: theory and applications to geometry and
   $K$-theory},
   series={Ergebnisse der Mathematik und ihrer Grenzgebiete. 3. Folge.},
   volume={44},
   publisher={Springer-Verlag},
   place={Berlin},
   date={2002},
   pages={xvi+595},
   isbn={3-540-43566-2},
   review={\MRref{1926649}{2003m:58033},}
}

\bib{Lueck:ApproxSurvey}{article}{
  author={L{\"u}ck, W.},
  title={Survey on approximating \(L^2\)-invariants by their classical counterparts: Betti numbers, torsion invariants and homological growth},
  date={2015},
  status={eprint},
  note={\arXiv{1501.07446}},
}

\bib{Rolfsen:KnotsAndLinks}{book}{
   author={Rolfsen, D.},
   title={Knots and links},
   note={Mathematics Lecture Series, No. 7},
   publisher={Publish or Perish, Inc., Berkeley, Calif.},
   date={1976},
   pages={ix+439},
   review={\MRref{0515288}{}},
}

\bib{Seifert:Geschlecht}{article}{
   author={Seifert, H.},
   title={\"Uber das Geschlecht von Knoten},
   language={German},
   journal={Math. Ann.},
   volume={110},
   date={1935},
   number={1},
   pages={571--592},
   issn={0025-5831},
   review={\MRref{1512955}{}},
}

\bib{Silver-Williams:Mahler}{article}{
   author={Silver, D.\,S.},
   author={Williams, S.\,G.},
   title={Mahler measure, links and homology growth},
   journal={Topology},
   volume={41},
   date={2002},
   number={5},
   pages={979--991},
   issn={0040-9383},
   review={\MRref{1923995}{}},
}

\bib{Weisstein:IrrationalityMeasure}{webpage}{
  author={Weisstein, E.\,W.},
  title={Irrationality measure},
  note={MathWorld---A Wolfram web resource},
  myurl={http://mathworld.wolfram.com/IrrationalityMeasure.html},
  date={accessed June 1, 2016},
}

\end{biblist}
\end{bibdiv}

\end{document}